\documentclass[11pt]{amsart}
\pdfoutput=1

\usepackage[usenames,dvipsnames]{xcolor}   %

\usepackage[utf8]{inputenc}
\usepackage[T1]{fontenc}

\usepackage[letterpaper,margin=1in]{geometry}

\usepackage{lmodern}
\usepackage{microtype}
\usepackage{etoolbox}                 %
\usepackage{lettrine}

\usepackage{mathtools}
\usepackage{amsmath,amssymb,amsfonts,amsxtra,bm}
\usepackage{amsthm}
\usepackage{mathrsfs}
\usepackage{latexsym}
\usepackage{nicefrac}       %

\usepackage{booktabs}
\usepackage{makecell,multirow}       %
\usepackage{colortbl}
\usepackage{graphicx}
\usepackage{epstopdf}
\usepackage{float}
\floatstyle{plaintop}
\restylefloat{table}
\usepackage{footnote, tablefootnote}

\usepackage{enumitem}
\usepackage{algorithm}
\usepackage{algorithmic}
\usepackage{comment}
\usepackage{xspace}
\usepackage{url}
\usepackage[most]{tcolorbox}

\usepackage[round]{natbib}

\usepackage{hyperref}

\numberwithin{equation}{section}

\makeatletter

\definecolor{darkblue}{rgb}{0.0,0.0,0.65}
\definecolor{darkred}{rgb}{0.68,0.05,0.0}
\definecolor{darkgreen}{rgb}{0.0,0.29,0.29}
\definecolor{darkpurple}{rgb}{0.47,0.09,0.29}

\patchcmd{\section}{\scshape}{\fontfamily{ppl}\bfseries\scshape\selectfont}{}{%
  \PackageWarning{aiproofs}{Could not bold the section font; amsart may have changed}}

\newcommand{\aiauthor}{%
  \author[Suvrit Sra]{Suvrit Sra\textsuperscript{\normalfont*}}%
  \thanks{*\,TU Munich, Dept.~of Mathematics, School of CIT; Garching, Germany.
    Email: \texttt{s.sra@tum.de}.}}

\makeatother

\hypersetup{colorlinks=true,linkcolor=blue!45!black,citecolor=blue!45!black,urlcolor=red!65!black}
\setlist{nosep,leftmargin=*}

\newenvironment{aiusage}{\section*{AI usage}}{}

\newcommand{\aiacknowledgements}[1][]{%
  \section*{Acknowledgements}
  \if\relax\detokenize{#1}\relax\else #1\ \fi
  The author (SS) thanks the Alexander von Humboldt (AvH) Foundation for
  their generous support via a Humboldt Professorship in AI.%
}

\newcommand{\reals}{\mathbb{R}}

\newcommand{\E}{\mathbb{E}}

\newcommand{\nlsum}{\sum\nolimits}

\newtheorem{theorem}{Theorem}[section]

\newtheorem{prop}[theorem]{Proposition}
\newtheorem{lemma}[theorem]{Lemma}
\newtheorem{corr}[theorem]{Corollary}

\theoremstyle{definition}

\definecolor{cdarkred}{RGB}{180,70,30} %
\definecolor{cdarkgreen}{RGB}{0,130,0} %
\definecolor{azure}{RGB}{240,255,255}
\definecolor{cmidgreen}{RGB}{201,219,216}
\definecolor{clightgreen}{RGB}{200,255,200}
\definecolor{clightgray}{RGB}{240,240,240}
\definecolor{cmygreen}{RGB}{10,120,0}
\definecolor{cmyblue}{RGB}{50,50,255}
\definecolor{sblue}{RGB}{100,100,255}
\definecolor{cmypurple}{RGB}{150,50,180}
\definecolor{cmyred}{RGB}{238, 64,  0}
\definecolor{cdarkblue}{RGB}{30,75,170}
\definecolor{csharpred}{RGB}{255,0,0}

\newcommand{\darkblue}[1]{{\color{cdarkblue} #1}}

\newtcolorbox{dashedbox}[1][]{
  nobeforeafter,
  colback=yellow!10!white,
  boxrule=0pt,
  enhanced jigsaw,
  top=2pt,
  left=2pt,
  right=2pt,
  bottom=0pt,
  borderline horizontal={0.5pt}{0pt}{dashed},
  borderline vertical={0.5pt}{0pt}{dashed},
  #1
}

\newtcbox{\tcbmathbox}[1][]{
  nobeforeafter,
  colback=yellow!30!white,
  boxrule=0pt,
  enhanced jigsaw,
  top=0pt,
  left=1pt,
  right=1pt,
  bottom=0pt,
  borderline horizontal={0.5pt}{0pt}{dashed},
  borderline vertical={0.5pt}{0pt}{dashed},
  #1
}

\newcommand{\Z}{\mathbb{Z}}
\newcommand{\Sc}{\mathcal{S}}
\newcommand{\dd}{\,\mathrm d}
\newcommand{\one}{\mathbf 1}
\allowdisplaybreaks
\subjclass[2020]{Primary 05E05, 26B25; Secondary 60K25, 90B22}
\keywords{complete homogeneous symmetric polynomial, convexity, McLeod inequality,
closed queuing network, discrete Poincare inequality}
\hypersetup{pdftitle={A proof of McLeod's 1959 conjecture on complete homogeneous symmetric ratios},
pdfauthor={Suvrit Sra}}

\begin{document}
\title[A proof of McLeod's 1959 conjecture]{A Proof of McLeod's 1959 Conjecture\\
on Complete Homogeneous Symmetric Ratios}
\aiauthor
\begin{abstract}
  We prove McLeod's (1959) conjecture that $(h_{\ell+r}/h_\ell)^{1/r}$ is
  subadditive for all $\ell\ge0$ and $r\ge1$, where $h_k$ is the complete
  homogeneous symmetric polynomial of degree~$k$
  (J.~B.~McLeod. \darkblue{\emph{``On four inequalities in symmetric
      functions.''}}  Proc.~Edinburgh Math.~Soc., 1959).  We first prove that
  the ratio $h_k/h_{k-1}$ is convex, and our proof takes an unexpected
  detour through queuing theory. The geometric mean of $r$ consecutive such
  ratios equals McLeod's conjectured function, but geometric means of convex
  functions need not be convex. Our second and decisive step is a tensorization
  trick that couples neighboring ratios tightly enough to make their geometric
  mean convex.
\end{abstract}
\maketitle

\section{Introduction}
For $x=(x_1,\dots,x_n)\in\reals_+^n$, the complete homogeneous symmetric
polynomial of degree $k$ is
\begin{equation}\label{eq:def-h}
 h_k(x) :=\sum_{1\le i_1\le i_2\le\cdots\le i_k\le n}x_{i_1}x_{i_2}\cdots x_{i_k},
 \qquad k\ge1,\qquad h_0=1.
\end{equation}
\citet[\S1, p.~211]{mcleod1959} proved that $h_r^{1/r}$ is subadditive on $\reals_+^n$ and asked whether $(h_{\ell+r}/h_\ell)^{1/r}$ is also subadditive. Our main result proves McLeod's conjecture in the equivalent form of convexity.

\begin{theorem}[McLeod's inequality]\label{thm:main}
For every $n\ge1$ and all integers $\ell\ge0$ and $r\ge1$, the function
\begin{equation}\label{eq:G-def}
 G_{\ell,r}(x)=\left(\frac{h_{\ell+r}(x)}{h_\ell(x)}\right)^{1/r},
 \qquad G_{\ell,r}(0)=0,
\end{equation}
is convex on $\reals_+^n$. Equivalently (since $G_{\ell,r}$ is positively homogeneous of degree one),
\begin{equation}\label{eq:subadd-main}
 G_{\ell,r}(x+y)\le G_{\ell,r}(x)+G_{\ell,r}(y),
 \qquad x,y\in\reals_+^n.
\end{equation}
\end{theorem}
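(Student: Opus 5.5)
The plan follows the two-step strategy announced in the abstract. Write $R_k:=h_k/h_{k-1}$ for $k\ge1$. Telescoping gives
\[
G_{\ell,r}=\Bigl(\prod_{k=\ell+1}^{\ell+r}R_k\Bigr)^{1/r},
\]
so $G_{\ell,r}$ is the geometric mean of $r$ consecutive ratios.

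\textbf{Step 1: each $R_k$ is convex.} Normalize $x$ into a probability vector $p_i=x_i/\sum_j x_j$. Then $h_k(x)/(\sum x)^k$ is the normalization constant of a closed product-form (Gordon--Newell) queuing network with $k$ customers and $n$ single-server stations of relative loads $x_i$. In this model $h_{k-1}/h_k$ is the network throughput $\lambda_k$, and mean-value analysis gives
\[
R_k=\sum_i x_i\bigl(1+\E_{k-1}[N_i]\bigr),
\]
where $N_i$ is the queue length at station $i$ in the $(k-1)$-customer network. Convexity of $R_k$ amounts to nonnegativity of its Hessian quadratic form. Differentiating $h_k$ in $x_i$ yields $\partial_i h_k=\sum_{m\ge1} x_i^{m-1}h_{k-m}$-type identities, i.e.\ moments of the $N_i$. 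So the Hessian form becomes a combination of covariances of linear functionals of the occupancy vector $N$ under the product-form law on $\{N\in\Z_+^n:\sum N_i=k-1\}$. I expect to show it is nonnegative via a discrete Poincar\'e (or Brascamp--Lieb type) inequality for this measure. That inequality should follow from a spectral-gap bound for the reversible customer-hopping dynamics, whose generator's Dirichlet form dominates the variance of the relevant test function.

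\textbf{Step 2: convexity of the geometric mean.} Geometric means of convex functions are not convex in general, so I need a coupling. For a concave-type combination, the classical criterion is: $(\prod f_k)^{1/r}$ is convex if the vector $(\log f_k)$ restricted to each line $x+tv$ satisfies
\[
\frac1r\sum_k\frac{f_k''}{f_k}\ \ge\ \frac1r\sum_k\Bigl(\frac{f_k'}{f_k}\Bigr)^2-\Bigl(\frac1r\sum_k\frac{f_k'}{f_k}\Bigr)^2,
\]
that is, the average relative curvature dominates the empirical variance of the log-derivatives. My tensorization approach is to realize the product $\prod_{k}R_k$ itself as a single ratio of the same type: $h_{\ell+r}/h_\ell$ in $n$ variables is related to a ratio $\tilde h_{m}/\tilde h_{m-1}$ for a larger symmetric system. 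One option is the $r$-fold tensor power $x^{\otimes r}$ with the "block" complete symmetric structure. Another is a queuing network with $r$ classes, or a batch-customer model, in which a throughput of the $r$-step transition is again controlled by the Poincar\'e inequality from Step 1. Concretely, I will aim to prove the displayed inequality directly. I will express each $R_k'/R_k$ and $R_k''/R_k$ as (co)variances under the $(k-1)$-customer measures. I will then use the monotone coupling of the networks with $k-1$ and $k$ customers, adding one customer, to show that the log-derivatives of neighboring $R_k$ differ by a quantity whose square is controlled by the curvature terms. Tensorizing the Poincar\'e inequality over the $r$ consecutive levels should then give the averaged bound.

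The main obstacle is Step 2, specifically bounding the variance of $\{R_k'/R_k\}_{k}$ along a line by the average curvature. My first attempt will use the one-customer coupling to write $R_{k+1}'/R_{k+1}-R_k'/R_k$ as a covariance. I will then apply Cauchy--Schwarz against the Dirichlet form already used to prove $R_k''\ge0$, so that the Step 1 inequality is reused with a slack term that exactly absorbs the cross terms. Finally, homogeneity of degree one ($G_{\ell,r}(tx)=tG_{\ell,r}(x)$) converts convexity into the subadditivity \eqref{eq:subadd-main}. Continuity at $0$ with $G_{\ell,r}(0)=0$ is immediate from the bound $G_{\ell,r}(x)\le C\sum_i x_i$.
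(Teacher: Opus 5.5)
Your reduction of convexity of the geometric mean to $\sum_k\kappa_k\ge\sum_k(d_k-\bar d)^2$ along a line, with $d_k=(\log R_k)'$ and $\kappa_k=R_k''/R_k$, is correct and is exactly the paper's starting identity. Neither of your two steps, however, contains the idea that actually proves anything.

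In Step~1 you use the Gordon--Newell network only through mean-value analysis. (Incidentally, Little's law gives $kR_k=\sum_i x_i(1+\E_{k-1}[N_i])$, not $R_k=\cdots$.) You then defer positivity of the Hessian to an unspecified Poincar\'e or Brascamp--Lieb inequality. Written via $\log R_k=\log h_k-\log h_{k-1}$, the Hessian is a difference of covariance structures under two different measures (with $k$ and $k-1$ customers), plus diagonal and rank-one terms. A spectral-gap bound controls one variance by one Dirichlet form and gives no evident handle on such a signed difference, and you name no candidate constant. The paper reports that a direct Hessian attack only goes through for $k\le4$.

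The paper's proof avoids the Hessian entirely. All networks are put on one probability space with service times $x_iE_{i,j}$. The max-plus recursion then makes each completion time $D_i(j;x)$ a pathwise convex, nondecreasing function of $x$, since it is built from sums and maxima of linear functions. The almost-sure limit $D_i(j;x)/j\to R_k(x)$, which comes from the product-form throughput $h_{k-1}/h_k$ and the ergodic theorem, inherits convexity. Without an argument of this kind, Step~1 is a hope rather than a proof.

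Step~2 has the same gap at the quantitative level. ``Neighbouring log-slope differences are controlled by curvature, then a Poincar\'e inequality over the $r$ levels'' is the right shape, but it closes only with two specific ingredients your plan lacks.

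The first is the coupled estimate $\kappa_k\ge\frac{\sigma_{k-1}}{\alpha_{k-1}}(d_k-d_{k-1})^2$, where $\sigma_k=1/R_k=h_{k-1}/h_k$ and $\alpha_k=2\sigma_k-\sigma_{k-1}-\sigma_{k+1}$. It must come together with $\alpha_k\ge0$, i.e.\ concavity of the throughput in the number of customers, which you never mention. The paper gets both by re-applying Step~1 to $R_k$ in $n+M$ variables at the boundary point $(x,0)$, not from a Dirichlet form or a one-customer coupling. Testing that Hessian against $(v,s/M,\dots,s/M)$ and letting $M\to\infty$ turns the auxiliary block into $\tau_k\alpha_{k-1}$, with $\tau_k=R_k/R_{k-1}$. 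This is a second difference in $k$; a single auxiliary variable would only give a first difference. The mixed entry is $-\tau_k(d_k-d_{k-1})$, and positive semidefiniteness of the resulting $2\times2$ form is the estimate.

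The second ingredient is a Poincar\'e inequality whose weights match these. Set $\eta_p=d_{\ell+p+1}-d_{\ell+p}$ and use the path's Green kernel $\Gamma_{pq}=\min(p,q)-pq/r$ with weights $\alpha_{\ell+p}$. Summing over $p$ with $\alpha_{\ell+p}>0$ (the coupled estimate forces $\eta_p=0$ otherwise), the sum of squared deviations of $d_{\ell+1},\dots,d_{\ell+r}$ is at most $\sum_p\bigl(\sum_q\Gamma_{pq}\alpha_{\ell+q}\bigr)\eta_p^2/\alpha_{\ell+p}$. The inner sum $\sum_q\Gamma_{pq}\alpha_{\ell+q}$ is the gap between $\sigma_{\ell+p}$ and the chord of $\sigma$ over $[\ell,\ell+r]$, hence at most $\sigma_{\ell+p}$. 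That is exactly what makes the two bounds meet. An unweighted path Poincar\'e inequality (constant of order $r^2$), or Cauchy--Schwarz with unspecified slack, need not match the weights $\sigma_{k-1}/\alpha_{k-1}$. Your alternative constructions (tensor powers $x^{\otimes r}$, multiclass or batch networks) are not developed to the point where they can be checked.
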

Beyond McLeod's $\ell=0$ result, the case $\ell=1$ was proved
by~\citet{baston1978two}.  \citet[Theorems~14--15]{sra2020} extended both cases
to coordinatewise powers (see also
Section~\ref{sec:discussion}). Theorem~\ref{thm:main} finally resolves the
general case.  Our analysis also yields a quantitative lower bound for the
directional curvature of $G_{\ell,r}$---see Appendix~\ref{app:remainder}.

\subsection{High-level proof strategy}\label{sec:idea}
Taking cue from the proof of concavity of an analogous ratio of elementary symmetric
polynomials~\citep{marcus1957inequalities,sra2020}, we could consider
\begin{equation}
  \label{eq:1}
  \frac{h_{\ell+r}}{h_\ell} = \frac{h_{\ell+r}}{h_{\ell+r-1}} \times \cdots \times \frac{h_{\ell+1}}{h_\ell}.
\end{equation}
However, we are pursuing convexity \emph{not} concavity---could we still hope that convexity of the individual ratios $\rho_k=h_k/h_{k-1}$ helps us toward proving convexity of the product~\eqref{eq:1}? Curiously, even proving the convexity of the ratio $\rho_k$ turns out to be far from trivial; it is the content of Theorem~\ref{thm:adjacent}, a result whose proof relies on an unexpected connection to queuing.

Convexity of $\rho_k$ by itself is insufficient: $G_{\ell,r}$ is the geometric mean of these ratios, and geometric means of convex functions need not be
convex. This step is precisely where the elementary symmetric argument succeeds and ours stalls. This obstacle forces us to revisit geometric means more closely. While geometric means do not preserve convexity in general, perhaps for the ratios
$\rho_k$ they still do, because neighboring ratios are tightly linked. We use two ingredients to turn this hope into a proof:
\vskip4pt
\begin{enumerate}[label=(\roman*)]
\item work with the \emph{reciprocal ratios} $h_k/h_{k+1}$, and exploit their concavity in $k$;
\item \emph{tensorize an auxiliary variable}, splitting it equally among many
  auxiliary variables; importantly, it yields a bound coupling the curvatures and
  logarithmic slopes of neighboring ratios.
\end{enumerate}

\vskip5pt Concavity in $k$ is already known in queuing theory; our
tensorization trick also recovers an independent proof. Tensorization
rests on the relation $\partial_z h_k(x,z)\big|_{z=0}=h_{k-1}(x)$ for an auxiliary
variable $z$; importantly, such auxiliary variables let neighboring degrees interact. We may
use as many auxiliaries as needed, and in the limit only cross terms survive. A discrete Poincar\'e inequality then combines concavity in $k$
with a bound on curvature to control the geometric mean
(Section~\ref{sec:general}). %

\section{Convexity of consecutive ratios}\label{sec:adjacent-main}
The polynomial $h_k(x)$ normalizes the stationary law of a closed network of
$n$ exponential single-server stations with service means $x_i$ and $k$
circulating customers~\citep{gordon1967closed}. Its reciprocal ratio
$h_{k-1}/h_k$ is the \emph{throughput} at each station (the long-run rate of
service completions).  For a fixed customer count,
$\rho_k=h_k/h_{k-1}$ is therefore the \emph{time per completion}: the
average time between successive service completions at a station. We
use this interpretation to prove convexity of consecutive ratios in the service
means. %

\begin{theorem}[Consecutive-ratio convexity]\label{thm:adjacent}
For every $n\ge1$ and $k\ge1$, the function
\begin{equation}\label{eq:rho-def}
 \rho_k(x)=\frac{h_k(x)}{h_{k-1}(x)},\qquad \rho_k(0)=0,
\end{equation}
is convex and coordinatewise nondecreasing on $\reals_+^n$.
\end{theorem}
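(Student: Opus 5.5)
Since $h_k$ and $h_{k-1}$ are homogeneous of degrees $k$ and $k-1$, $\rho_k$ is positively homogeneous of degree one. The plan is to check monotonicity directly and then verify convexity along lines, using the probabilistic (closed-network) representation of $h_k$ to turn the second derivative into a statement about stationary laws with $k$ and $k-1$ customers.

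\emph{Setup and monotonicity.} Work first on the open orthant and pass to $\reals_+^n$ by continuity. Write $\pi_k$ for the product-form law on $\{N\in\Z_{\ge0}^n:\sum_i N_i=k\}$ with $\pi_k(N)\propto\prod_i x_i^{N_i}$, so that $h_k$ is its normalizer. Then $\partial_{x_i}\log h_k=\E_k[N_i]/x_i$. Hence
\[
\partial_{x_i}\log\rho_k=\bigl(\E_k[N_i]-\E_{k-1}[N_i]\bigr)/x_i .
\]
Monotonicity therefore reduces to $\E_k[N_i]\ge\E_{k-1}[N_i]$, i.e.\ mean queue lengths increase with the population. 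This follows from the mean value analysis recursion (arrival theorem):
\[
\E_k[N_i]=x_i\bigl(1+\E_{k-1}[N_i]\bigr)/\rho_k,\qquad \rho_k=\nlsum_i x_i\bigl(1+\E_{k-1}[N_i]\bigr).
\]
An induction on $k$ then gives $\E_k[N_i]\ge\E_{k-1}[N_i]$. Alternatively, one can use the monotone coupling of $\pi_{k-1}$ and $\pi_k$ obtained by adding one customer.

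\emph{Convexity along lines.} Fix $x>0$ and a direction $v$, and set $Y=\sum_i N_iv_i/x_i$ and $Z=\sum_i N_iv_i^2/x_i^2$. Differentiating $t\mapsto\log h_k(x+tv)$ twice at $t=0$ gives first derivative $\E_k Y$ and second derivative $\operatorname{Var}_k Y-\E_k Z$. With $g=\log\rho_k$, convexity of $\rho_k=e^g$ along the line is $g''+(g')^2\ge0$, that is,
\begin{equation*}
\bigl(\operatorname{Var}_kY-\operatorname{Var}_{k-1}Y\bigr)-\bigl(\E_kZ-\E_{k-1}Z\bigr)+\bigl(\E_kY-\E_{k-1}Y\bigr)^2\ \ge\ 0 .
\end{equation*}
To compare level $k$ with level $k-1$, I would realize $\pi_k$ as $\pi_{k-1}$ plus one extra customer. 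By the arrival theorem, given $N\sim\pi_{k-1}$ the new customer joins station $J$ with probability proportional to $x_J(1+N_J)$, and the resulting configuration $N+e_J$ is exactly $\pi_k$-distributed. Under this coupling $Y$ increases by $v_J/x_J$ and $Z$ by $v_J^2/x_J^2$. The target inequality then becomes an explicit identity-plus-inequality in the variables $(N,J)$. The term $-\E[v_J^2/x_J^2]$ must be absorbed by the variance increase, which is $2\operatorname{Cov}(Y,v_J/x_J)+\operatorname{Var}(v_J/x_J)$ plus a correction from the dependence of $J$ on $N$, together with the squared mean shift $(\E[v_J/x_J])^2$.

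\emph{Main obstacle.} The hard step is this final inequality. Naively, $\operatorname{Var}(v_J/x_J)+(\E v_J/x_J)^2=\E[v_J^2/x_J^2]$ exactly cancels the $Z$ term. What remains is $2\operatorname{Cov}(Y,v_J/x_J)$ plus the terms coming from the size-biasing of $J$ by $1+N_J$, and these must be shown nonnegative. I would try to exhibit this remainder as a sum of squares, or as a covariance of two comonotone functions of $N$: both $Y$ and the conditional mean of $v_J/x_J$ are increasing in the same weighted direction. If that fails, I would fall back on induction on $n$: add one station at a time, using $h_k(x,z)=\sum_j z^jh_{k-j}(x)$, and reduce to checking convexity in the single new variable $z$.
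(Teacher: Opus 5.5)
\textbf{Verdict: there is a genuine gap.} Convexity, which is the real content of the theorem, is not proved, and the tools you propose for it do not work.

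\textbf{What is sound.} Your monotonicity argument works, with two small repairs. Summing the mean-value-analysis identity gives $k\rho_k=\sum_i x_i(1+\E_{k-1}[N_i])$, not $\rho_k=\sum_i x_i(1+\E_{k-1}[N_i])$. The induction also needs $\sigma_k=h_{k-1}/h_k\ge\sigma_{k-1}$, i.e.\ $h_{k-1}^2\ge h_{k-2}h_k$; then $\E_k[N_i]-\E_{k-1}[N_i]=x_i\bigl[(1+\E_{k-1}[N_i])\sigma_k-(1+\E_{k-2}[N_i])\sigma_{k-1}\bigr]\ge0$. Your reformulation of convexity along a line as $g''+(g')^2\ge0$ is also correct.

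\textbf{The coupling is false.} Take $N\sim\pi_{k-1}$ and draw $J$ with conditional probability $x_J(1+N_J)/\sum_l x_l(1+N_l)$. Then $N+e_J$ is not $\pi_k$-distributed, because the normalizer depends on $N$. For $n=2$, $k=2$, $x=(1,2)$ you get $\Pr(N+e_J=(2,0))=\tfrac13\cdot\tfrac12=\tfrac16$, whereas $\pi_2(2,0)=\tfrac17$. The arrival theorem only says that an arriving customer in the $k$-customer network sees $\pi_{k-1}$. Normalizing globally does yield $\pi_k$, but then $N$'s law is tilted by $\sum_l x_l(1+N_l)$ and is no longer $\pi_{k-1}$.

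\textbf{A genuine coupling would not help either.} Suppose $N\sim\pi_{k-1}$ and $N+e_J\sim\pi_k$ exactly. Your own computation then shows the target equals exactly $2\operatorname{Cov}(Y(N),v_J/x_J)$, with no correction terms. Since the target is $D_v^2\rho_k/\rho_k$, this covariance does not depend on the coupling, and its nonnegativity is just a restatement of the theorem.

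\textbf{The comonotonicity heuristic has no traction.} The direction $v$ has arbitrary signs. On the slice $\{\sum_iN_i=k-1\}$, distinct states are incomparable coordinatewise, so no FKG/Harris-type argument applies off the shelf.

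\textbf{The fallback fails too.} Convexity in a new variable $z$, combined with the inductive hypothesis at $z=0$, does not give joint convexity. The mixed $x$--$z$ second derivatives are exactly where the difficulty lies; the paper notes that direct Hessian arguments stall at $k\ge5$.

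\textbf{The missing idea} is to use the queuing model at the level of sample paths rather than stationary laws. The paper places $k$ customers in a cyclic network and uses service times $x_iE_{i,j}$ with fixed standard exponentials $E_{i,j}$, so every service time is linear in $x$ on one probability space. Completion times then obey the max-plus recursion
$D_1(j)=x_1E_{1,j}+\max\{D_1(j-1),D_n(j-k)\}$ and $D_i(j)=x_iE_{i,j}+\max\{D_i(j-1),D_{i-1}(j)\}$.
Sums and maxima preserve convexity and coordinatewise monotonicity, so each $D_i(j;\cdot)$ is convex and nondecreasing in $x$ path by path. The product-form law gives throughput $h_{k-1}/h_k$, hence $D_i(j;x)/j\to\rho_k(x)$ almost surely, and both properties pass to the limit. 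Convexity thus comes from the dynamics, and no second-order inequality in the stationary distribution is ever needed.
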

The monotonicity is known: since $h_k=s_{(k)}$, it is a special case of
the monotonicity of Schur ratios $s_\lambda/s_\mu$ for $\mu\subseteq\lambda$
\citep[Proposition~8.1]{khare2021}. Our proof also recovers it directly.

\vskip4pt
\noindent\textbf{Note}. We first tried a direct Hessian computation to establish convexity. For  $k\le4$ with some algebra and a Cauchy--Schwarz argument this route succeeds---see Appendix~\ref{sec:direct-hessian}. But for $k\ge5$, the
resulting identity does not seem to admit any direct proof of Hessian positivity. %

\subsection{The queuing theory proof}\label{sec:queue-main}
The reason queuing theory unlocks the proof is almost coincidental: there exists
a queuing system in which $\rho_k$ is the average time between successive
service completions, and in this system the completion times are convex almost
by construction. More specifically, we must find a way to compare completion
times as the service means vary, and to quantify the throughput (i.e., the
long-run rate of completions) to identify their long-run behavior. The max-plus
recursions of~\citet{baccelli1992synchronization} supply the comparison, while
the product-form stationary law of~\citet{gordon1967closed} supplies the
throughput. %

\begin{proof}[\textbf{Proof of Theorem~\ref{thm:adjacent}}]
For $n=1$ the ratio is $x_1$, so assume $n\ge2$ and first take
$x\in\reals_{++}^n$. Put $k$ customers at station~$1$ of a cyclic network $1\to2\to\cdots\to n\to1$. Each station serves one customer at a time, in arrival order, with independent exponential service times of mean $x_i$.

\vskip4pt
\noindent\textbf{\emph{Step 1}}. We begin by proving convexity of completions in the means $x$. To compare networks with different service means, fix independent standard exponentials $E_{i,j}$ and use $x_iE_{i,j}$ as the $j$th service time at station~$i$ for every $x$. Only the deterministic scales $x_i$ vary between networks; each service time is therefore linear in $x$ on this common probability space.

Service begins when both the server and the next customer are available,
so its starting time is the maximum of these two availability times.
Because all customers follow the same route and each station serves them
in arrival order, customer order is preserved. The customer completing the $j$th service at
station $i-1$ is the $j$th arrival to station $i$ for $2\le i\le n$.
At station~$1$, the first $k$ required arrivals are present at time zero;
thereafter the $j$th required arrival is the $(j-k)$th completion at station~$n$.
Writing $D_i(j;x)$ for the time of the $j$th completion at station~$i$,
we therefore obtain
\begin{align}
 D_1(j;x)&=x_1E_{1,j}+\max\{D_1(j-1;x),D_n(j-k;x)\},\notag\\
 D_i(j;x)&=x_iE_{i,j}+\max\{D_i(j-1;x),D_{i-1}(j;x)\},\quad 2\le i\le n,
 \label{eq:recursion-main}
\end{align}
with $D_i(j;x)=0$ for $j\le0$. For each realization of the exponentials, these
recursions use only addition of nonnegative linear functions of $x$ and taking a
maximum.  Both operations preserve convexity and coordinatewise
monotonicity. The return to station~$1$ uses the earlier index $j-k<j$, so the
cyclic route permits induction in the order $(j,1),\dots,(j,n)$.  Starting from
the zero initial times, this induction shows that every $D_i(j;\cdot)$ is convex
and coordinatewise nondecreasing.

\vskip4pt
\noindent\textbf{\emph{Step 2}}. Next, we show that the normalized completion times $D_i(j;x)/j$ converge to $\rho_k(x)$ as $j\to\infty$. To identify the long-run behavior of the completion times, consider the
\emph{queue-length process}. Let $e_i\in\Z_+^n$ denote the $i$th standard basis vector, with indices read cyclically ($e_{n+1}=e_1$). The process has states
$q\in\Z_+^n$ with $\sum_iq_i=k$ and transitions $q\to q-e_i+e_{i+1}$ at rate
$1/x_i$ when $q_i>0$.  Global balance gives the stationary law
$\pi_x(q)=x^q/h_k(x)$, where $x^q:=x_1^{q_1}x_2^{q_2}\cdots x_n^{q_n}$. Incoming
and outgoing fluxes both equal $\pi_x(q)\sum_{i:q_i>0}x_i^{-1}$ (see Appendix~\ref{sec:network}). Consequently, the throughput at any
station is
\begin{equation}\label{eq:rate-main}
 \sum_{q:q_i>0}\frac{\pi_x(q)}{x_i}
 =\frac{1}{h_k(x)}\sum_{q:q_i>0}x^{q-e_i}
 =\frac{h_{k-1}(x)}{h_k(x)} = \frac{1}{\rho_k(x)}.
\end{equation}
By ergodicity, this throughput~\eqref{eq:rate-main} is the long-run number of completions per unit time at each station; inverting the completion count therefore yields
\begin{equation}\label{eq:limit-main}
 \frac{D_i(j;x)}j\longrightarrow\rho_k(x)\qquad\text{almost surely as } j\to\infty.
\end{equation}
Appendix~\ref{sec:limit} proves this limit from the bounded-rate
counting-process martingale.

Finally, since convexity and coordinatewise monotonicity each compare $\rho_k$
at finitely many points, both pass from the completion times to the almost-sure
limit~\eqref{eq:limit-main} on $\reals_{++}^n$. Since $\rho_k$ is
continuous on $\reals_+^n$, both properties extend to $\reals_+^n$.
\end{proof}

\section{The proof of Theorem~\ref{thm:main}: reciprocal ratios and tensorization}\label{sec:general}
The queuing argument proves convexity of every $\rho_k$. To prove convexity of the geometric mean $G_{\ell,r}=\bigl(\prod_{k=\ell+1}^{\ell+r}\rho_k\bigr)^{1/r}$ we develop new ideas below. It suffices to establish convexity of $t \mapsto G_{\ell,r}(x+tv)$ for $x\in \reals_+^n$ and an arbitrary direction $v$ (ensuring that $x+tv \in \reals_+^n$ too). As before we may work with $x>0$. We write $D_v$ for the directional derivative along $v$, and seek to prove that $D_v^2G_{\ell,r}(x)\ge0$. Because $\log G_{\ell,r}$ is the average of the $\log\rho_k$, we describe each factor by its logarithmic slope and
relative curvature,
\begin{equation}\label{eq:slope-curv-def}
 d_k=D_v\log\rho_k,\qquad \kappa_k=\frac{D_v^2\rho_k}{\rho_k},
 \qquad \bar d=\frac1r\sum_{k=\ell+1}^{\ell+r}d_k.
\end{equation}
Twice differentiating the logarithm of $G_{\ell,r}$ (in the geometric mean form above) gives
\begin{equation}\label{eq:telescope-hessian}
 \frac{D_v^2G_{\ell,r}}{G_{\ell,r}}
 =\frac1r\left\{\sum_{k=\ell+1}^{\ell+r}\kappa_k
                 -\sum_{k=\ell+1}^{\ell+r}(d_k-\bar d)^2\right\}.
\end{equation}
Indeed, $D_v^2\log\rho_k=\kappa_k-d_k^2$, and the first derivative of
$\log G_{\ell,r}$ is $\bar d$. Theorem~\ref{thm:adjacent} makes each
$\kappa_k$ nonnegative, but \eqref{eq:telescope-hessian} subtracts the variance of the
logarithmic slopes. Convexity of $G_{\ell,r}$, equivalently nonnegativity of~\eqref{eq:telescope-hessian} therefore requires enough curvature to control this variance.

\subsection{Reciprocal ratios}
To control the variance in~\eqref{eq:telescope-hessian}, we need to know how
the ratios change from one degree to the next. Since~\eqref{eq:telescope-hessian}
is written in logarithms, the first candidate is convexity of
$k\mapsto\log\rho_k$. We work instead with a stronger property: concavity of the
reciprocal ratios
\begin{equation}\label{eq:sigma-def}
 \sigma_k :=\frac{h_{k-1}}{h_k}=\rho_k^{-1}\quad(k\ge1),\qquad \sigma_0=0,
\end{equation}
that is, nonnegativity of their second differences
\begin{equation}\label{eq:alpha-def}
 \alpha_k := 2\sigma_k-\sigma_{k-1}-\sigma_{k+1}.
\end{equation}
Concavity of $\sigma_k$ implies the logarithmic statement, since $\log\sigma_k$ is
then concave and $\log\rho_k=-\log\sigma_k$. Because $\sigma_k$ is the station
throughput (Section~\ref{sec:adjacent-main}), $\alpha_k\ge0$ says that throughput
is concave in the customer count. This concavity is known~\citep{dowdy1984throughput}, and \citet[Theorem~1]{shanthikumar1988second} extended it to networks whose service
rates are nondecreasing and concave in the local queue length, a class that
contains ours (Appendix~\ref{sec:sigma}).

But concavity of $\sigma_k$ alone does not control the logarithmic slopes in
\eqref{eq:telescope-hessian}. We also need to bound their neighboring
differences by curvature in the service means. The tensorization argument below proves this crucial coupled estimate (it also recovers an independent proof of concavity). %

\subsection{Tensorization}\label{sec:amplification}
The coupled estimate comes from augmenting $x$ to the tuple $(x,z)$, where $z$ is an auxiliary, and then computing the Hessian of $\rho_k(x,z)$. Derivatives $\partial_z|_{z=0}$ lower the degree, e.g.,
$\partial_zh_j(x,z)\big|_{z=0}=h_{j-1}(x)$, so the Hessian involves both $\rho_{k-1}$ and $\rho_k$.
Convexity of $\rho_k$ therefore ties the curvature $\kappa_k$~(see~\eqref{eq:slope-curv-def}) to the slope difference $d_k-d_{k-1}$.

A single auxiliary, however, ties them too loosely. With $\tau_k=\rho_k/\rho_{k-1}>0$ ($k\ge2$), its
diagonal entry is $\partial_z^2\rho_k=\tau_k(2\sigma_{k-1}-2\sigma_{k-2})$, a first difference of
$\sigma$, whereas the variance bound of Section~\ref{sec:poincare} needs the second
differences~$\alpha_k$. This limitation is what tensorization addresses. More precisely, we split $z$
into $M$ auxiliaries that share its value equally. We keep track of what this split changes using the
generating function $H_x(t)=\sum_{m\ge0}h_m(x)t^m=\prod_i(1-x_it)^{-1}$; each auxiliary
multiplies it by one factor, and the coefficient of $t^s$ in that factor multiplies $h_{j-s}$. A single
auxiliary with value $u$ contributes $(1-ut)^{-1}=1+ut+u^2t^2+\cdots$, so that
$h_j(x,u)=h_j(x)+u\,h_{j-1}(x)+u^2h_{j-2}(x)+\cdots$. Splitting $u$ equally among $M$ auxiliaries gives instead
\begin{equation}\label{eq:tensor-generating}
 H_{(x,(u/M)\one_M)}(t)
 =H_x(t)\left(1-\frac{ut}{M}\right)^{-M}\quad
 \xrightarrow{M\to\infty}\quad H_x(t)e^{ut}.
\end{equation}
As $M\to\infty$, the split keeps the first-order term $ut$ but halves the second-order term, from $u^2t^2$ to $u^2t^2/2$;
this halving turns $\tau_k(2\sigma_{k-1}-2\sigma_{k-2})$ into
$\tau_k(2\sigma_{k-1}-\sigma_{k-2}-\sigma_k)=\tau_k\alpha_{k-1}$.

\begin{prop}[Coupled curvature]\label{prop:sigma-concave}\label{prop:local-curvature}
For every $k\ge1$, $\alpha_k\ge0$. For $k\ge2$,
\begin{equation}\label{eq:local-curvature}
 \kappa_k\ge\frac{\sigma_{k-1}}{\alpha_{k-1}}(d_k-d_{k-1})^2
 \quad\text{if }\alpha_{k-1}>0;\quad\text{and if}\ \alpha_{k-1}=0, \text{then}\ d_k=d_{k-1}.
\end{equation}
\end{prop}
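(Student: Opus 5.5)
The plan is to apply Theorem~\ref{thm:adjacent} to the tensorized point $(x,(u/M)\one_M)\in\reals_+^{n+M}$, let $M\to\infty$, and read off the $2\times2$ Hessian of the limit in the variables $(t,u)$.

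\textbf{Step 1: the limiting convex function.} By Theorem~\ref{thm:adjacent}, for each $M$ the function
\[
(t,u)\mapsto \rho_k\bigl(x+tv,(u/M)\one_M\bigr)
\]
is convex on the set where $x+tv\ge0$ and $u\ge0$. The argument is jointly linear in $(t,u)$, so this is a convex function of two variables. By \eqref{eq:tensor-generating}, its numerator and denominator converge coefficientwise, as polynomials in $(t,u)$, to
\[
h^{(u)}_j(x+tv),\qquad h^{(u)}_j(y):=\sum_{s\ge0}\frac{u^s}{s!}\,h_{j-s}(y),
\]
with $j=k$ and $j=k-1$. The convergence is uniform with all derivatives on compacts, and the denominators stay positive for $x>0$.

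\textbf{Step 2: Hessian at $u=0$.} Pointwise limits of convex functions are convex, so
\[
F(t,u)=h^{(u)}_k(x+tv)/h^{(u)}_{k-1}(x+tv)
\]
is convex on a neighbourhood of $(0,0)$ intersected with $\{u\ge0\}$. Its Hessian is therefore positive semidefinite at interior points $u>0$. Since $F$ is a rational function smooth near $u=0$, continuity gives that $\nabla^2F(0,0)\succeq0$. Handling the boundary this way is the main technical point: PSD-ness at $u=0$ is obtained as a limit from the interior, rather than by evaluating along directions with $u<0$.

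\textbf{Step 3: computing the entries.} Use $\partial_u h^{(u)}_j|_{u=0}=h_{j-1}$ and $\partial_u^2h^{(u)}_j|_{u=0}=h_{j-2}$; the second uses the halved term $u^2/2$. The entries are then:
\begin{itemize}
\item $F_{tt}=D_v^2\rho_k=\kappa_k\rho_k$.
\item $F_u=(h_{k-1}^2-h_kh_{k-2})/h_{k-1}^2=1-\tau_k$.
\item $F_{tu}=D_v(1-\tau_k)=-\tau_k(d_k-d_{k-1})$, since $D_v\log\tau_k=d_k-d_{k-1}$.
\item $F_{uu}=\tau_k\alpha_{k-1}$, the formula announced before the proposition.
\end{itemize}
For the last entry, the quotient rule gives
\[
F_{uu}=\frac{h_{k-2}}{h_{k-1}}-\frac{2h_{k-1}h_{k-2}+h_kh_{k-3}}{h_{k-1}^2}+\frac{2h_kh_{k-2}^2}{h_{k-1}^3}.
\]
Dividing by $\tau_k=h_kh_{k-2}/h_{k-1}^2$ should give $2\sigma_{k-1}-\sigma_{k-2}-\sigma_k$. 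For $k=2$ this uses the convention $\sigma_0=0$, i.e.\ $h_{-1}=0$. I expect this algebraic verification to be the only fiddly computation.

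\textbf{Step 4: conclusions.}
\begin{itemize}
\item $F_{uu}\ge0$ with $\tau_k>0$ gives $\alpha_{k-1}\ge0$ for all $k\ge2$, i.e.\ $\alpha_k\ge0$ for all $k\ge1$.
\item The determinant condition $F_{tt}F_{uu}\ge F_{tu}^2$ reads $\kappa_k\rho_k\tau_k\alpha_{k-1}\ge\tau_k^2(d_k-d_{k-1})^2$. When $\alpha_{k-1}>0$, divide by $\rho_k\tau_k\alpha_{k-1}$ and use $\tau_k/\rho_k=1/\rho_{k-1}=\sigma_{k-1}$ to obtain \eqref{eq:local-curvature}.
\item If $\alpha_{k-1}=0$, then $F_{uu}=0$, and PSD-ness forces $F_{tu}=0$, hence $d_k=d_{k-1}$.
\end{itemize}
Finally, the restriction to $x>0$ is removed exactly as in the proof of Theorem~\ref{thm:adjacent}.
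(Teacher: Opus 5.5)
Your proposal is correct and is essentially the paper's tensorization argument. You apply Theorem~\ref{thm:adjacent} with $M$ auxiliaries each carrying $1/M$ of the auxiliary value, send $M\to\infty$, and read off $\alpha_{k-1}\ge0$ and the curvature bound from positive semidefiniteness of the $2\times2$ form with entries $D_v^2\rho_k$, $-\tau_k(d_k-d_{k-1})$ and $\tau_k\alpha_{k-1}$. The only difference is the order of limits: you pass to the limit function $h^{(u)}_k/h^{(u)}_{k-1}$ first, so its $uu$ entry is directly $\tau_k\alpha_{k-1}$ and positive semidefiniteness at $u=0$ follows by continuity from $u>0$. The paper instead keeps $M$ finite, separates the diagonal entry $a_k$ from the off-diagonal entry $b_k$, and lets $(a_k-b_k)/M\to0$ in the quadratic inequality.
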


\begin{proof}
  Let $z=(z_1,\dots,z_M)$ and write $\widetilde\rho_k(x,z)=\rho_k(x_1,\dots,x_n,z_1,\dots,z_M)$.  %
  By Theorem~\ref{thm:adjacent}:
\begin{equation}\label{eq:hess-boundary-psd}
 \nabla^2\widetilde\rho_k(x,0)\succeq0.
\end{equation}
At $(x,0)$, the auxiliary derivatives of $h_j$ are $h_{j-1}$ at first order and $h_{j-2}$ at mixed second order. The quotient rule then gives,
for $a\ne b$,
\begin{align}
 \partial_{z_a}\widetilde\rho_k&=1-\tau_k,\qquad
 \partial_{z_a}\partial_{z_b}\widetilde\rho_k=\tau_k\alpha_{k-1},
 \label{eq:aux-derivs}\\
 D_v\partial_{z_a}\widetilde\rho_k&=-\tau_k(d_k-d_{k-1}).
 \label{eq:mixed-derivative}
\end{align}
Here $h_{-1}=0$; Appendix~\ref{app:auxiliary} gives the full calculation. In particular, the second
formula is
\[
 \partial_{z_a}\partial_{z_b}\widetilde\rho_k
 =\tau_k\left(2\frac{h_{k-2}}{h_{k-1}}
                 -\frac{h_{k-1}}{h_k}-\frac{h_{k-3}}{h_{k-2}}\right),
\]
which explains the choice of reciprocal ratios.

Concavity in $k$ comes from the auxiliary direction $\one_M$ alone. By symmetry, the auxiliary Hessian
block has one common diagonal entry $a_k$ and one common off-diagonal entry $b_k=\tau_k\alpha_{k-1}$.
Neither depends on $M$. Positivity in the direction $\one_M$ gives $a_k+(M-1)b_k\ge0$ for every $M$, and
hence $b_k\ge0$.  Applying this bound at degree $k+1$, where $b_{k+1}=\tau_{k+1}\alpha_k$, proves
$\alpha_k\ge0$.

To bring in the service means, test the Hessian~\eqref{eq:hess-boundary-psd} against
$v$ together with the auxiliary direction, i.e., against $(v,s/M,\dots,s/M)$. Writing
$u_k=-\tau_k(d_k-d_{k-1})$ for the mixed entry~\eqref{eq:mixed-derivative},
the result is
\begin{equation}\label{eq:tensor-quadratic}
 D_v^2\rho_k+2su_k+s^2\left(b_k+\frac{a_k-b_k}{M}\right)\ge0.
\end{equation}
The mixed term $2su_k$ does not depend on $M$, while the diagonal correction tends to zero. Sending
$M\to\infty$ leaves
\begin{equation}\label{eq:quadratic-limit}
 D_v^2\rho_k+2su_k+b_ks^2\ge0\qquad(s\in\reals).
\end{equation}
If $b_k>0$, minimize over $s$ and divide by $\rho_k$ to obtain
\begin{equation}
  \label{eq:2}
  \kappa_k\ge\frac{u_k^2}{\rho_kb_k}
  =\frac{\sigma_{k-1}}{\alpha_{k-1}}(d_k-d_{k-1})^2.
\end{equation}
If $b_k=0$, nonnegativity of the affine expression for all $s$ forces
$u_k=0$, so $d_k=d_{k-1}$. The same tensorization therefore proves both
reciprocal concavity and the curvature estimate.
\end{proof}

\subsection{A finite variance bound}\label{sec:poincare}
We must now combine the lower-bound~\eqref{eq:2} with another inequality that helps us prove non-negativity of~\eqref{eq:telescope-hessian}. Lemma~\ref{prop:poincare} performs this task.

\begin{lemma}[Poincar\'e inequality]\label{prop:poincare}
Fix $\ell\ge0$ and $r\ge2$, and let $J=\{1\le p<r:\alpha_{\ell+p}>0\}$. Let
$w_0,\dots,w_{r-1}\in\reals$ have mean $\bar w$ and increments
$\eta_p=w_p-w_{p-1}$, with $\eta_p=0$ for $p\notin J$. Then
\begin{equation}\label{eq:poincare}
 \nlsum_{j=0}^{r-1}(w_j-\bar w)^2
 \le\nlsum_{p\in J}\frac{R_p}{\alpha_{\ell+p}}\eta_p^2
 \le\nlsum_{p\in J}\frac{\sigma_{\ell+p}}{\alpha_{\ell+p}}\eta_p^2,
\end{equation}
where $R_p\ge0$ is the gap between $\sigma$ and its chord over $[\ell,\ell+r]$, equalling
\begin{equation}\label{eq:finite-row}
 R_p=\sigma_{\ell+p}-\tfrac{r-p}{r}\sigma_\ell-\tfrac{p}{r}\sigma_{\ell+r}.
\end{equation}
\end{lemma}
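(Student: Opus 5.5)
The plan is to write the left-hand side of \eqref{eq:poincare} as a quadratic form in the increments $\eta=(\eta_1,\dots,\eta_{r-1})$, recognize its kernel as the Green's function of the discrete second difference on $\{0,\dots,r\}$ with Dirichlet boundary, and then apply a weighted Cauchy--Schwarz (Schur test) with weights $\alpha_{\ell+p}$. The point is that the chord gap $R_p$ is exactly this Green's function applied to the weights.

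\textbf{Step 1 (quadratic form).} Since $w_j=w_0+\sum_{p\le j}\eta_p$, we have
\[
 w_j-\bar w=\nlsum_{p=1}^{r-1}\eta_p\Bigl(\one[p\le j]-\tfrac{r-p}{r}\Bigr).
\]
Expanding the square and summing over $j=0,\dots,r-1$ gives
$\sum_j(w_j-\bar w)^2=\sum_{p,q}K(p,q)\eta_p\eta_q$ with
\[
 K(p,q)=\frac{\min(p,q)\,\bigl(r-\max(p,q)\bigr)}{r}\ge0 .
\]
This is a routine count; I will verify it by computing $\#\{j\ge\max(p,q)\}$ minus the mean corrections. The kernel $K$ is the inverse of the tridiagonal matrix $L$ with $(L f)_p=2f_p-f_{p-1}-f_{p+1}$ and $f_0=f_r=0$. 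Equivalently, for each fixed $q$ the function $p\mapsto K(p,q)$ vanishes at $p=0$ and $p=r$, is piecewise linear, and has unit kink at $p=q$.

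\textbf{Step 2 (the chord gap solves $LR=\alpha$).} From \eqref{eq:finite-row}, $R_0=R_r=0$. The chord is affine in $p$, so its second difference vanishes and
\[
 2R_p-R_{p-1}-R_{p+1}=2\sigma_{\ell+p}-\sigma_{\ell+p-1}-\sigma_{\ell+p+1}=\alpha_{\ell+p}.
\]
Hence $R_p=\sum_{q=1}^{r-1}K(p,q)\alpha_{\ell+q}$. By Proposition~\ref{prop:sigma-concave} every $\alpha_{\ell+q}\ge0$, and $K\ge0$, so $R_p\ge0$. Moreover $R_p\ge\sum_{q\in J}K(p,q)\alpha_{\ell+q}$, with equality since $\alpha_{\ell+q}=0$ off $J$.

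\textbf{Step 3 (Schur test).} Only $p,q\in J$ contribute, because $\eta_p=0$ off $J$. For $p,q\in J$, AM--GM gives
\[
 \eta_p\eta_q\le\tfrac12\Bigl(\tfrac{\alpha_{\ell+q}}{\alpha_{\ell+p}}\eta_p^2+\tfrac{\alpha_{\ell+p}}{\alpha_{\ell+q}}\eta_q^2\Bigr).
\]
Multiplying by $K(p,q)\ge0$, summing, and using symmetry of $K$ yields
\[
 \nlsum_{p,q}K(p,q)\eta_p\eta_q\le\nlsum_{p\in J}\frac{\eta_p^2}{\alpha_{\ell+p}}\nlsum_{q\in J}K(p,q)\alpha_{\ell+q}=\nlsum_{p\in J}\frac{R_p}{\alpha_{\ell+p}}\eta_p^2,
\]
which is the first inequality in \eqref{eq:poincare}.

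\textbf{Step 4 (second inequality).} This follows from $R_p\le\sigma_{\ell+p}$, since the subtracted chord term $\frac{r-p}{r}\sigma_\ell+\frac pr\sigma_{\ell+r}$ is nonnegative. This holds because $\sigma_0=0$ and $\sigma_k=h_{k-1}/h_k>0$ for $x>0$.

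\textbf{Main obstacle.} The main risk is bookkeeping in Step 1, namely getting the exact Green's function including the mean-centering. I will double-check it through the characterization $LK=I$, i.e.\ that $K(\cdot,q)$ is the Dirichlet solution with a unit kink at $q$, rather than by brute expansion. The conceptual crux is Step 2: observing that $R$ is precisely $K\alpha$ is what makes the Schur test tight with the stated weights.
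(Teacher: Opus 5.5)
Your proposal is correct and follows the paper's proof essentially step for step: the Green-kernel form $\Gamma_{pq}=\min(p,q)-pq/r$ of the variance, the weighted AM--GM (Schur test) with weights $\alpha_{\ell+p}$, the identification $R_p=\sum_q\Gamma_{pq}\alpha_{\ell+q}$ from $R_0=R_r=0$ and $2R_p-R_{p-1}-R_{p+1}=\alpha_{\ell+p}$, and finally $R_p\le\sigma_{\ell+p}$ because the chord term is nonnegative. The only cosmetic difference is that you obtain the kernel by expanding the centered increments directly, whereas the paper writes it as the Gram form $B^\top PB$ with a mean-zero projection $P$.
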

\begin{proof}
A quick calculation (see Lemma~\ref{lem:green-kernel}) allows us to write the left hand side of~\eqref{eq:poincare} as
\begin{equation}\label{eq:green-variance}
 \nlsum_{j=0}^{r-1}(w_j-\bar w)^2=\nlsum_{p,q\in J}\Gamma_{pq}\eta_p\eta_q,
 \quad\text{where}\quad\Gamma_{pq}=\min(p,q)-pq/r\ge0,
\end{equation}
where the sum runs over $J$ because $\eta_p=0$ for $p\notin J$. Since $\Gamma\ge0$, we can bound~\eqref{eq:green-variance} as follows: for any $\beta_p>0$, multiply $\eta_p\eta_q \le \tfrac12\bigl(\eta_p^2\beta_q/\beta_p+\eta_q^2\beta_p/\beta_q\bigr)$ by
$\Gamma_{pq}$ and sum over $p,q\in J$ to obtain
\[
 \nlsum_{p,q\in J}\Gamma_{pq}\eta_p\eta_q\le\nlsum_{p\in J}c_p\eta_p^2,
 \quad\text{where}\quad c_p=\frac1{\beta_p}\nlsum_{q\in J}\Gamma_{pq}\beta_q.
\]
Set $\beta_p=\alpha_{\ell+p}$ (positive on $J$ and by
Proposition~\ref{prop:local-curvature}, zero outside). Then
$\alpha_{\ell+p}c_p=\sum_{q=1}^{r-1}\Gamma_{pq}\alpha_{\ell+q}$. The matrix
$\Gamma$ inverts the second difference with zero boundary values: every $f$ with
$f_0=f_r=0$ satisfies $f_p=\sum_q\Gamma_{pq}(2f_q-f_{q-1}-f_{q+1})$. Extended
by~\eqref{eq:finite-row} to $p=0,r$, the sequence $R$ has $R_0=R_r=0$ and, since
the chord is linear in $p$, $2R_p-R_{p-1}-R_{p+1}=\alpha_{\ell+p}$. Hence
$\alpha_{\ell+p}c_p=R_p$, which gives the first inequality in~\eqref{eq:poincare};
it also shows $R_p\ge0$, since $\Gamma$ and $\alpha$ are nonnegative. The second
inequality follows from $R_p\le\sigma_{\ell+p}$, which holds because
$\sigma_\ell,\sigma_{\ell+r}\ge0$.
\end{proof}
\noindent\textbf{\emph{Proof of Theorem~\ref{thm:main}}}. For $r=1$ it is
Theorem~\ref{thm:adjacent}. For $r\ge2$, take $w_j=d_{\ell+1+j}$, so that
$\eta_p=d_{\ell+p+1}-d_{\ell+p}$. Proposition~\ref{prop:local-curvature} gives
$\eta_p=0$ for $p\notin J$, which is the hypothesis of Lemma~\ref{prop:poincare},
and it bounds $\kappa_{\ell+p+1}$ for $p\in J$. Since every $\kappa_k\ge0$, dropping $\kappa_{\ell+1}$ and the terms with
$p\notin J$ gives
\begin{equation}
  \label{eq:4}
  \nlsum_{k=\ell+1}^{\ell+r}\kappa_k
  \stackrel{\eqref{eq:2}}{\ge}\nlsum_{p\in J}\frac{\sigma_{\ell+p}}{\alpha_{\ell+p}}\eta_p^2
  \stackrel{\eqref{eq:poincare}}{\ge}\nlsum_{k=\ell+1}^{\ell+r}(d_k-\bar d)^2.
\end{equation}
Using~\eqref{eq:4}, non-negativity of the right side
of~\eqref{eq:telescope-hessian} is immediate, which proves Theorem~\ref{thm:main}.\qed

\section{Minor Extension to Powers}\label{sec:discussion}
\citet[Theorems~14--15]{sra2020} extended the cases $\ell=0$ and $\ell=1$
of Theorem~\ref{thm:main} to coordinatewise powers $x^{\circ p}=(x_1^p,\dots,x_n^p)$. Theorem~\ref{thm:main} already implies the same extension for all $\ell$ and $r$.

\begin{corr}[Power extension]\label{cor:power}
For every $n\ge1$, all integers $\ell\ge0$, $r\ge1$, and every real $p\ge1$,
\begin{equation}\label{eq:Gp-def}
 G^{[p]}_{\ell,r}(x)=
 \left(\frac{h_{\ell+r}(x^{\circ p})}{h_\ell(x^{\circ p})}\right)^{1/(pr)},
 \qquad G^{[p]}_{\ell,r}(0)=0,
\end{equation}
is convex on $\reals_+^n$, or equivalently subadditive. No $0<p<1$ has
this property for all $n,\ell,r$.
\end{corr}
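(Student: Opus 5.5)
Two parts: convexity for $p\ge1$, and failure for every $0<p<1$.

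\emph{Convexity for $p\ge1$.} We write $G^{[p]}_{\ell,r}(x)=\bigl(G_{\ell,r}(x^{\circ p})\bigr)^{1/p}$ and use the standard composition argument for gauge-type functions. By Theorem~\ref{thm:main}, $G_{\ell,r}$ is convex and positively homogeneous of degree one on $\reals_+^n$. By Theorem~\ref{thm:adjacent}, each $\rho_k$ is coordinatewise nondecreasing, so their geometric mean $G_{\ell,r}$ is nondecreasing as well. Subadditivity is the target; it is equivalent to convexity because $G^{[p]}_{\ell,r}$ is again $1$-homogeneous. Take $x,y\in\reals_+^n$ and set $a=G^{[p]}_{\ell,r}(x)$ and $b=G^{[p]}_{\ell,r}(y)$, assuming $a,b>0$ (zero cases follow by continuity or directly). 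Put $\lambda=a/(a+b)$. Convexity of $t\mapsto t^p$ gives
\[
(x+y)^{\circ p}=\bigl(\lambda (x/\lambda)+(1-\lambda)(y/(1-\lambda))\bigr)^{\circ p}\le \lambda (x/\lambda)^{\circ p}+(1-\lambda)(y/(1-\lambda))^{\circ p}
\]
coordinatewise. Monotonicity, then convexity, then homogeneity of $G_{\ell,r}$ give
\[
G_{\ell,r}((x+y)^{\circ p})\le \lambda^{1-p}G_{\ell,r}(x^{\circ p})+(1-\lambda)^{1-p}G_{\ell,r}(y^{\circ p})=\lambda^{1-p}a^p+(1-\lambda)^{1-p}b^p=(a+b)^p.
\]
Taking $p$th roots yields $G^{[p]}_{\ell,r}(x+y)\le a+b$. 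This is the Minkowski-type step. The only ingredients are the monotonicity in Theorem~\ref{thm:adjacent} and the convexity in Theorem~\ref{thm:main}.

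\emph{Failure for $0<p<1$.} A single counterexample suffices, and the simplest choice is $n=1$ (or $n=2$) with $\ell=0$, $r=1$.
\begin{itemize}
\item For $n=1$: $h_1(x^{\circ p})=x^p$, so $G^{[p]}=(x^p)^{1/p}=x$. This is linear, so it gives no counterexample.
\item For $n=2$, $\ell=0$, $r=1$: $G^{[p]}(x)=(x_1^p+x_2^p)^{1/p}$, the $p$-"norm". For $p<1$ it is not convex: at $x=e_1$ and $y=e_2$ we get $G(e_1+e_2)=2^{1/p}>2=G(e_1)+G(e_2)$.
\end{itemize}
Subadditivity therefore fails for every $0<p<1$ already at $n=2$, $\ell=0$, $r=1$. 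This settles the negative claim, since the statement only asserts failure for some $n,\ell,r$.

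The main obstacle is just making sure the monotonicity needed in the composition step is available. It is, via Theorem~\ref{thm:adjacent}, since products of nonnegative nondecreasing functions are nondecreasing. The boundary cases $a=0$ or $b=0$ I would handle by continuity, perturbing $x,y$ into the positive orthant.
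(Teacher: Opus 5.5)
Your proposal is correct and is essentially the paper's argument. The paper isolates the same Minkowski-type step as Lemma~\ref{lem:power-convexify}: coordinatewise Jensen, then monotonicity, convexity and homogeneity of $G_{\ell,r}$. Its normalization $u=x/a$, $v=y/b$ differs from your $x/\lambda$, $y/(1-\lambda)$ only by the factor $a+b$. For sharpness it uses the same counterexample $n=2$, $\ell=0$, $r=1$, $x=e_1$, $y=e_2$.

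One small simplification: $G_{\ell,r}>0$ off the origin, so $a=0$ forces $x=0$. The boundary case is therefore immediate, and no perturbation argument is needed.
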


The same product formula that led to the Hessian argument also preserves
coordinatewise monotonicity: each factor $\rho_k$ is nondecreasing by
Theorem~\ref{thm:adjacent}, so $G_{\ell,r}$ is nondecreasing as well.
Together with convexity, this monotonicity allows coordinatewise powers to be inserted
into the triangle inequality. The following lemma isolates that last step.

\begin{lemma}[Monotone power convexification]\label{lem:power-convexify}
Let $F:\reals_+^n\to[0,\infty)$ be convex, positively homogeneous of
degree one, coordinatewise nondecreasing, and positive off the origin.
Then $x\mapsto F(x^{\circ p})^{1/p}$ is convex and one-homogeneous for
every $p\ge1$.
\end{lemma}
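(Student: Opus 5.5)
Set $\Phi(x)=F(x^{\circ p})^{1/p}$. One-homogeneity is immediate: $(tx)^{\circ p}=t^p x^{\circ p}$, so $F((tx)^{\circ p})=t^pF(x^{\circ p})$ and $\Phi(tx)=t\Phi(x)$. For a one-homogeneous function on the cone $\reals_+^n$, convexity is equivalent to subadditivity, so it suffices to prove $\Phi(x+y)\le\Phi(x)+\Phi(y)$. The plan is to reduce this to Minkowski's inequality for the power $p$, using convexity and monotonicity of $F$ to move the $\ell^p$-type combination inside $F$.

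\textbf{Step 1 (trivial cases and normalization).} If $x=0$ or $y=0$ the claim is trivial. Otherwise $a:=\Phi(x)>0$ and $b:=\Phi(y)>0$, since $F$ is positive off the origin. Put $\lambda=a/(a+b)\in(0,1)$, $X=x/a$ and $Y=y/b$. Then $F(X^{\circ p})=F(x^{\circ p})/a^p=1$ and likewise $F(Y^{\circ p})=1$, and $x+y=(a+b)(\lambda X+(1-\lambda)Y)$.

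\textbf{Step 2 (coordinatewise Jensen).} Because $t\mapsto t^p$ is convex on $[0,\infty)$ for $p\ge1$, we have coordinatewise
\[
(\lambda X+(1-\lambda)Y)^{\circ p}\le \lambda X^{\circ p}+(1-\lambda)Y^{\circ p}.
\]
Coordinatewise monotonicity of $F$ followed by convexity of $F$ then gives
\[
F\bigl((\lambda X+(1-\lambda)Y)^{\circ p}\bigr)\le F\bigl(\lambda X^{\circ p}+(1-\lambda)Y^{\circ p}\bigr)\le \lambda\cdot 1+(1-\lambda)\cdot1=1.
\]

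\textbf{Step 3 (conclude).} By homogeneity of $\Phi$,
\[
\Phi(x+y)=(a+b)\,F\bigl((\lambda X+(1-\lambda)Y)^{\circ p}\bigr)^{1/p}\le a+b=\Phi(x)+\Phi(y).
\]
Subadditivity together with one-homogeneity gives convexity, since $\Phi(\theta x+(1-\theta)y)\le\Phi(\theta x)+\Phi((1-\theta)y)=\theta\Phi(x)+(1-\theta)\Phi(y)$.

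The only delicate point is the normalization in Step 1: dividing by $a$ and $b$ requires both to be strictly positive, which is exactly why the hypothesis that $F$ is positive off the origin is needed. Note also that monotonicity of $F$ is used essentially in Step 2, since convexity alone cannot absorb the Jensen inequality for $t\mapsto t^p$.
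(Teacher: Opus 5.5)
Your proposal is correct and follows the paper's proof essentially step for step. Like the paper, you normalize $x,y$ to the unit level set of $F(\cdot^{\circ p})$, apply coordinatewise Jensen for $t\mapsto t^p$, and then use monotonicity followed by convexity of $F$ to get subadditivity; homogeneity then upgrades subadditivity to convexity.
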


\begin{proof}
The transformed function is one-homogeneous, so it suffices to prove
subadditivity. Normalize each vector to lie on its unit level set: for
nonzero $x,y$, put
$a=F(x^{\circ p})^{1/p}$, $b=F(y^{\circ p})^{1/p}$,
$u=x/a$, $v=y/b$, and $\lambda=a/(a+b)$. Thus
$F(u^{\circ p})=F(v^{\circ p})=1$. Coordinatewise Jensen, followed by
monotonicity and convexity of $F$, gives
\[
 F\bigl((\lambda u+(1-\lambda)v)^{\circ p}\bigr)
 \le F\bigl(\lambda u^{\circ p}+(1-\lambda)v^{\circ p}\bigr)\le1.
\]
Since $\lambda u+(1-\lambda)v=(x+y)/(a+b)$, homogeneity gives
$F((x+y)^{\circ p})^{1/p}\le a+b$. If either vector is zero the inequality
is immediate. Subadditivity and homogeneity imply convexity.
\end{proof}

\begin{proof}[Proof of Corollary~\ref{cor:power}]
Apply Lemma~\ref{lem:power-convexify} to $F=G_{\ell,r}$.
For sharpness, take $\ell=0$, $r=1$, $n=2$ and $x=e_1$, $y=e_2$.
For every $0<p<1$,
\[
 G^{[p]}_{0,1}(e_1+e_2)=2^{1/p}>2
   =G^{[p]}_{0,1}(e_1)+G^{[p]}_{0,1}(e_2).
\]
Thus no $p<1$ works uniformly over the family.
\end{proof}

\begin{aiusage}
The results in this paper were developed through extensive use of an AI
assistant (OpenAI's GPT~Pro). None of the claims were obtained zero-shot:
they emerged through sustained back-and-forth discussion, including failed
attempts, counterexamples, audits, and repairs. For example, our first
approach to Theorem~\ref{thm:adjacent} used the exponential-moment
representation $\E S_x^k=k!\,h_k(x)$, where $S_x=\sum_ix_i\xi_i$ with
independent standard exponential $\xi_i$; this representation was key to
the $h_k/h_1$ case in the author's earlier work~\citep{sra2020}. Here it ran
into rough weather, so we cast a wider net: we asked the assistant to
survey where else $h_k$ or the ratio $h_k/h_{k-1}$ arise as probabilities or rates in the literature, and among the
candidates it returned, the queuing interpretation proved the most
fruitful. The tensorization idea was also our own contribution. We used GPT and Claude for assistance with drafting and writing, though the main text is almost entirely human written.
\end{aiusage}

\aiacknowledgements

\appendix

\section{Deferred proofs}\label{app:consecutive}
This appendix supplies the proofs that Sections~\ref{sec:adjacent-main}
and~\ref{sec:general} defer, in the order in which they are used, and ends
with the quantitative remainder mentioned in the introduction.

\subsection{The queuing network}\label{sec:network}\label{sec:limit}

For the queuing proof of Theorem~\ref{thm:adjacent}, this subsection derives
the stationary law of the network and its completion-time limit. Fix throughout integers $n\ge2$ and $k\ge1$,
and let $x\in\reals_{++}^n$. The network of Section~\ref{sec:queue-main} has state space
$\Sc_k=\{q\in\Z_+^n:\sum_{i=1}^nq_i=k\}$, with station indices understood
cyclically.

\begin{prop}[Stationary law]\label{prop:stationary}
The queue-length chain of Section~\ref{sec:queue-main} is irreducible and has
stationary distribution $\pi_x(q)=x^q/h_k(x)$ for $q\in\Sc_k$, where
$x^q=\prod_{i=1}^nx_i^{q_i}$.
\end{prop}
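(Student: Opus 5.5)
The plan has two parts: verify global balance for the candidate law $\pi_x(q)=x^q/h_k(x)$ on $\Sc_k$, and prove irreducibility of the chain. Irreducibility then gives uniqueness of the stationary distribution, since the state space is finite. Normalization is immediate: $\sum_{q\in\Sc_k}x^q=h_k(x)$ is exactly the definition~\eqref{eq:def-h}, because monomials of degree $k$ correspond bijectively to occupation vectors $q$ with $\sum_iq_i=k$.

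For irreducibility, I would show that every state reaches the reference state $ke_1$ and that $ke_1$ reaches every state. From any $q$, the customers at stations $2,\dots,n$ can be moved forward one at a time along the cycle until they arrive at station~$1$. Each such transition $q\to q-e_i+e_{i+1}$ has positive rate $1/x_i$ because $q_i>0$. Conversely, starting from $ke_1$ and for a target $q$, I would send $q_n$ customers from station~$1$ through stations $2,\dots,n$ to station~$n$. Then I would send $q_{n-1}$ customers to station $n-1$, and so on, filling from the far end so that previously placed customers are never disturbed. All rates are positive since $x\in\reals_{++}^n$.

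For global balance at a state $q$, the outflow is $\pi_x(q)\sum_{i:q_i>0}x_i^{-1}$. For the inflow, each $i$ with $q_i>0$ can be reached only from $q'=q+e_{i-1}-e_i$, via a departure from station $i-1$, at rate $1/x_{i-1}$. Here $q'_{i-1}\ge1$ automatically, so the transition is legal. Then
\[
\pi_x(q')\,x_{i-1}^{-1}=\pi_x(q)\,\frac{x_{i-1}}{x_i}\,x_{i-1}^{-1}=\pi_x(q)\,x_i^{-1}.
\]
Summing over $i$ with $q_i>0$ reproduces the outflow exactly, and every incoming transition into $q$ arises this way. The identity is in fact a partial balance: the flow into $q$ due to an arrival at station $i$ equals the flow out of $q$ due to a departure from station $i$. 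Indices are read cyclically throughout, so the case $i=1$ uses station $n$.

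The main point needing care is bookkeeping rather than difficulty. I need to be sure that the correspondence $i\leftrightarrow q'$ is a bijection between the incoming transitions and the stations $i$ with $q_i>0$. The case $n=2$, where $i-1$ and $i+1$ coincide cyclically, should be checked separately to confirm that no transition is double counted.
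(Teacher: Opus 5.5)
Your proposal is correct and follows the paper's proof essentially step for step. Both use normalization from \eqref{eq:def-h}, global balance by matching each admissible incoming transition to a term $\pi_x(q)/x_j$ of the outflow, and uniqueness from finite-state irreducibility. The only differences are cosmetic: the paper indexes incoming transitions by the departing station, via the predecessor $q+e_i-e_{i+1}$, while you index by the receiving station; your explicit route through $ke_1$ spells out the paper's one-line irreducibility remark; and your $n=2$ check is harmless, since the two predecessors $q\pm(e_1-e_2)$ are distinct.
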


\begin{proof}
Irreducibility is immediate: by repeated service completions, every customer can be moved around the cycle, so every composition of $k$ is reachable from every other one.

The normalization $\sum_{q\in\Sc_k}x^q=h_k(x)$ is exactly~\eqref{eq:def-h}. It
remains to check global balance. The total outgoing flux from a state $q$ is
$\pi_x(q)\sum_{i:q_i>0}1/x_i$. To enter $q$ through a completion at station
$i$, the preceding state must be $q+e_i-e_{i+1}$, which is admissible exactly
when $q_{i+1}>0$. Since $x^{q+e_i-e_{i+1}}=x^qx_i/x_{i+1}$, the associated
incoming flux is $\pi_x(q+e_i-e_{i+1})/x_i=\pi_x(q)/x_{i+1}$. Summing over the
admissible incoming transitions gives
$\pi_x(q)\sum_{i:q_{i+1}>0}1/x_{i+1}=\pi_x(q)\sum_{j:q_j>0}1/x_j$, which is the
outgoing flux. Hence $\pi_x$ is stationary. Uniqueness follows from
finite-state irreducibility.
\end{proof}

To justify the completion-time limit in~\eqref{eq:limit-main}, we first
count service completions. For a fixed station $i$, let $N_i(t;x)$ count the
completions by time $t$, and let $\Theta_k(x)$ be the
throughput~\eqref{eq:rate-main}.

\begin{lemma}[Long-run throughput]\label{lem:ergodic-throughput}
Fix $x\in\reals_{++}^n$. From any initial state of the closed network, every
station $i$ satisfies
\begin{equation}\label{eq:N-limit}
 \frac{N_i(t;x)}{t}\longrightarrow \Theta_k(x)
 =\frac{h_{k-1}(x)}{h_k(x)}
 \qquad\text{almost surely.}
\end{equation}
\end{lemma}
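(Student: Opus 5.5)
The plan is to treat $N_i(t;x)$ as a counting process with a bounded intensity driven by the finite-state queue-length chain, and to combine the ergodic theorem for that chain with a martingale strong law for the compensated count. The paper hints at exactly this with its reference to the ``bounded-rate counting-process martingale.''

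\textbf{Step 1: compensator and the martingale.} Let $Q(t)$ denote the queue-length chain on $\Sc_k$. Completions at station $i$ occur at rate $\lambda_i(Q(t))=x_i^{-1}\one\{Q_i(t)>0\}$. By the standard compensator theorem for jumps of a continuous-time Markov chain, the process
\[
 M_i(t)=N_i(t;x)-\int_0^t \lambda_i(Q(s))\dd s
\]
is a martingale with respect to the natural filtration. Its predictable quadratic variation is $\int_0^t\lambda_i(Q(s))\dd s\le t/x_i$.

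\textbf{Step 2: the martingale is negligible.} I want to show $M_i(t)/t\to0$ almost surely. Since $\langle M_i\rangle_t\le t/x_i$, the process $\int_0^t (1+s)^{-1}\dd M_i(s)$ is an $L^2$-bounded martingale. Indeed, its second moment is at most $x_i^{-1}\int_0^\infty(1+s)^{-2}\dd s<\infty$. It therefore converges almost surely, and Kronecker's lemma (continuous-time version) gives $M_i(t)/(1+t)\to0$.

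A more elementary alternative avoids the continuous-time Kronecker lemma. Sample at integer times, use Doob's $L^2$ inequality on the blocks $[m,m+1]$, and apply Borel--Cantelli to the fourth moments. Those moments are bounded because $N_i$ increments over unit intervals are stochastically dominated by Poisson$(1/x_i)$ counts.

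\textbf{Step 3: ergodic average of the compensator.} By Proposition~\ref{prop:stationary}, $Q$ is an irreducible chain on a finite state space with stationary law $\pi_x$. The ergodic theorem for continuous-time chains therefore gives, from any initial state,
\[
 \frac1t\int_0^t\lambda_i(Q(s))\dd s\to\sum_{q:q_i>0}\frac{\pi_x(q)}{x_i},
\]
and this limit equals $h_{k-1}(x)/h_k(x)$ by the computation~\eqref{eq:rate-main}. To justify the ergodic theorem, I would note that excursions between successive returns to a fixed state are i.i.d. with finite mean, and apply the renewal-reward theorem.

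Combining Steps 2 and 3 yields~\eqref{eq:N-limit}.

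\textbf{Main obstacle.} The main obstacle is Step 2: making the martingale strong law rigorous without heavy machinery. I would try the $L^2$/Kronecker route first, and fall back on the integer-time Borel--Cantelli argument if needed.

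A related subtlety is that the recursion variables $E_{i,j}$ must produce the same law as the Markov chain. This holds by memorylessness, so the almost-sure statement transfers to the coupled construction used in Theorem~\ref{thm:adjacent}.
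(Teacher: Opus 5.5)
Your proposal is correct and matches the paper's proof: the compensator decomposition with $\langle M_i\rangle_t\le t/x_i$, almost-sure negligibility of $M_i(t)/t$, and the finite-state ergodic theorem for the time-averaged intensity, which is identified with $h_{k-1}/h_k$ via~\eqref{eq:rate-main}. The only difference is in the martingale step, where the paper uses Doob's inequality on the dyadic horizons $[0,2^m]$ plus Borel--Cantelli; this is slightly more elementary than your Kronecker route, and also than your fourth-moment fallback, which needs $\E M_i(m)^4=O(m^2)$ and not just bounded increment moments, but either of your routes works.
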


\begin{proof}
The queue-length process $Q(t)$ is an irreducible continuous-time Markov chain on the finite state space $\Sc_k$, hence is positive recurrent and satisfies the ergodic theorem from every initial state.

The counting process at station $i$ has the compensator decomposition
\begin{equation}\label{eq:compensator}
 N_i(t;x)=M_i(t)+\int_0^t\frac{\one_{\{Q_i(s)>0\}}}{x_i}\,\dd s,
\end{equation}
where $M_i$ is a square-integrable martingale.  Its predictable quadratic variation satisfies $\langle M_i\rangle_t\le t/x_i$. To see that $M_i(t)/t\to0$ almost surely, Doob's inequality gives
$\Pr(\sup_{0\le s\le2^m}|M_i(s)|>\varepsilon2^m)\le1/(\varepsilon^2x_i2^m)$,
which is summable in $m$. Borel--Cantelli, followed by interpolation between consecutive dyadic times, proves the claim.  The Markov-chain ergodic theorem gives
\[
 \frac1t\int_0^t\frac{\one_{\{Q_i(s)>0\}}}{x_i}\,\dd s
 \longrightarrow
 \sum_{q:q_i>0}\pi_x(q)\frac1{x_i}
 =\Theta_k(x),
\]
where the last identity is~\eqref{eq:rate-main}.  Substitution into \eqref{eq:compensator} proves \eqref{eq:N-limit}.  The Markov-chain facts used here are standard; see, for example,~\citep{norris1997markov}.
\end{proof}

\begin{lemma}[Inversion of counting processes]\label{lem:inverse-count}
Let $N:[0,\infty)\to\Z_+$ be nondecreasing and right-continuous, with
$N(t)/t\to\theta\in(0,\infty)$. Then $D(r)=\inf\{t:N(t)\ge r\}$ satisfies
$D(r)/r\to1/\theta$.
\end{lemma}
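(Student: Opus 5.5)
The plan is a direct sandwich argument between $N$ and its generalized inverse $D$. First I record the basic properties of $D$. Since $N$ is integer-valued, nondecreasing and satisfies $N(t)/t\to\theta>0$, we have $N(t)\to\infty$. So for every $r$ the set $\{t:N(t)\ge r\}$ is nonempty and $D(r)<\infty$. Also $D$ is nondecreasing in $r$, and $D(r)\to\infty$ as $r\to\infty$, because $N$ is finite at every finite time.

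Next I would establish two inequalities linking $N$ and $D$ for $r>0$:
\[
 N\bigl(D(r)\bigr)\ge r,\qquad N(t)<r\ \text{ for all } t<D(r).
\]
The first follows from right-continuity: take $t_m\downarrow D(r)$ with $N(t_m)\ge r$ and pass to the limit. The second is the definition of the infimum.

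Now fix $\varepsilon\in(0,\theta)$ and choose $T$ with $(\theta-\varepsilon)t\le N(t)\le(\theta+\varepsilon)t$ for all $t\ge T$. Take $r$ large enough that $D(r)>T+1$; this is possible since $D(r)\to\infty$.

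\emph{Lower bound.} Applying the first inequality gives
\[
 r\le N(D(r))\le(\theta+\varepsilon)D(r),
\]
hence $D(r)/r\ge 1/(\theta+\varepsilon)$.

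\emph{Upper bound.} For any $t\in[T,D(r))$ the second inequality gives $(\theta-\varepsilon)t\le N(t)<r$. Letting $t\uparrow D(r)$ (an interval which is nonempty by the choice of $r$) yields
\[
 (\theta-\varepsilon)D(r)\le r,
\]
hence $D(r)/r\le 1/(\theta-\varepsilon)$.

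Since $\varepsilon$ is arbitrary, $D(r)/r\to1/\theta$.

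The only delicate point is the boundary behavior at jumps of $N$. Right-continuity is what gives $N(D(r))\ge r$. The strict inequality on $[T,D(r))$ is what makes the upper bound work; this requires $D(r)>T$, which in turn is where $D(r)\to\infty$, i.e.\ finiteness of $N$, enters. I expect no genuine obstacle beyond making these boundary cases explicit. In the application one takes $r=j$ an integer and $N=N_i(\cdot;x)$ from Lemma~\ref{lem:ergodic-throughput}, so that $D_i(j;x)/j\to1/\Theta_k(x)=\rho_k(x)$ almost surely, giving~\eqref{eq:limit-main}.
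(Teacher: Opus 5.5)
Your proof is correct and is essentially the paper's argument: the same $\varepsilon$--$T$ sandwich $(\theta-\varepsilon)t\le N(t)\le(\theta+\varepsilon)t$, inverted to trap $D(r)/r$ between $1/(\theta+\varepsilon)$ and $1/(\theta-\varepsilon)$. The only difference is where $N$ is evaluated. You evaluate it at $D(r)$ and just before it, which is why you need $N(D(r))\ge r$ and $D(r)\to\infty$. The paper instead evaluates $N$ at the deterministic times $t_-=(r-1)/(\theta+\varepsilon)$ and $t_+=r/(\theta-\varepsilon)$; these exceed $T$ automatically for large $r$, so no separate argument that $D(r)\to\infty$ is needed.
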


\begin{proof}
Fix $0<\varepsilon<\theta$, and choose $T<\infty$ with
$(\theta-\varepsilon)t\le N(t)\le(\theta+\varepsilon)t$ for $t\ge T$. For all
sufficiently large $r$, the times $t_-=(r-1)/(\theta+\varepsilon)$ and
$t_+=r/(\theta-\varepsilon)$ are at least $T$. The upper estimate on $N$ gives
$N(t_-)\le r-1$, so the $r$th completion has not occurred by $t_-$ and
$D(r)>t_-$. The lower estimate gives $N(t_+)\ge r$, so $D(r)\le t_+$.
Therefore $(r-1)/(r(\theta+\varepsilon))<D(r)/r\le1/(\theta-\varepsilon)$;
let $r\to\infty$ and then $\varepsilon\downarrow0$.
\end{proof}

Combining the two lemmas gives $D_i(r;x)/r\to1/\Theta_k(x)=\rho_k(x)$ almost
surely, which is~\eqref{eq:limit-main}.

\subsection{Derivatives at auxiliary zero coordinates}\label{app:auxiliary}

For Proposition~\ref{prop:local-curvature}, the two lemmas below verify the
derivative formulas~\eqref{eq:aux-derivs} and~\eqref{eq:mixed-derivative}.

\begin{lemma}[Auxiliary derivatives]\label{lem:aux-derivatives}
Let $k\ge2$, and let $a\ne b$ be auxiliary indices. Then
\begin{equation*}
 \frac{\partial\widetilde\rho_k}{\partial z_a}(x,0)=1-\tau_k,
 \qquad
 \frac{\partial^2\widetilde\rho_k}{\partial z_a\,\partial z_b}(x,0)=\tau_k\,\alpha_{k-1}.
\end{equation*}
\end{lemma}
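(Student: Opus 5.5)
We compute the derivatives from the expansion of $h_j$ in the auxiliary variables at $z=0$. Since $H_{(x,z)}(t)=H_x(t)\prod_a(1-z_at)^{-1}$, extracting coefficients gives, to second order in $z$,
\[
 h_j(x,z)=h_j+h_{j-1}\nlsum_a z_a+h_{j-2}\Bigl(\nlsum_{a<b}z_az_b+\nlsum_a z_a^2\Bigr)+O(|z|^3),
\]
with the convention $h_{-1}=h_{-2}=0$. Everything then follows from the quotient rule applied to $\widetilde\rho_k=h_k(x,z)/h_{k-1}(x,z)$ at $z=0$.

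\emph{First derivative.} Write $N=h_k(x,z)$ and $D=h_{k-1}(x,z)$. At $z=0$ we have $\partial_aN=h_{k-1}$ and $\partial_aD=h_{k-2}$. Hence
\[
 \partial_a\widetilde\rho_k=\frac{h_{k-1}}{h_{k-1}}-\frac{h_kh_{k-2}}{h_{k-1}^2}=1-\tau_k,
\]
because $\tau_k=\rho_k/\rho_{k-1}=h_kh_{k-2}/h_{k-1}^2$.

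\emph{Mixed second derivative.} For $a\ne b$ we have $\partial_a\partial_bN=h_{k-2}$ and $\partial_a\partial_bD=h_{k-3}$. The second derivative of a quotient is
\[
 \partial_a\partial_b(N/D)=\frac{N_{ab}}{D}-\frac{N_aD_b+N_bD_a}{D^2}-\frac{N D_{ab}}{D^2}+\frac{2ND_aD_b}{D^3}.
\]
Substituting the values at $z=0$ gives
\[
 \frac{h_{k-2}}{h_{k-1}}-\frac{2h_{k-2}}{h_{k-1}}-\frac{h_kh_{k-3}}{h_{k-1}^2}+\frac{2h_kh_{k-2}^2}{h_{k-1}^3}.
\]
Now factor out $\tau_k=h_kh_{k-2}/h_{k-1}^2$, term by term:
\begin{itemize}
 \item $-h_{k-2}/h_{k-1}=-\tau_k\, h_{k-1}/h_k=-\tau_k\sigma_k$;
 \item $-h_kh_{k-3}/h_{k-1}^2=-\tau_k\, h_{k-3}/h_{k-2}=-\tau_k\sigma_{k-2}$;
 \item $2h_kh_{k-2}^2/h_{k-1}^3=2\tau_k\, h_{k-2}/h_{k-1}=2\tau_k\sigma_{k-1}$.
\end{itemize}
The sum is $\tau_k(2\sigma_{k-1}-\sigma_k-\sigma_{k-2})=\tau_k\alpha_{k-1}$, as claimed.

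\emph{Boundary case $k=2$.} Here $h_{-1}=0$ and $\sigma_0=0$, so the term involving $h_{k-3}=h_{-1}$ vanishes, and this matches $\sigma_{k-2}=\sigma_0=0$ in the formula. One should check that $h_{k-3}/h_{k-2}$ is read as $\sigma_{k-2}$ with this convention.

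The only delicate point is the bookkeeping in the second-derivative quotient rule, in particular getting the factor $2$ in the cross terms. There is no real obstacle beyond that.
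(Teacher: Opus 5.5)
Your proof is correct and follows the paper's argument essentially line by line. Both use the coefficient extraction $\partial_{z_a}h_j(x,0)=h_{j-1}$ and $\partial^2_{z_az_b}h_j(x,0)=h_{j-2}$, apply the quotient rule, and factor out $\tau_k=h_kh_{k-2}/h_{k-1}^2$ term by term to reach $\tau_k(2\sigma_{k-1}-\sigma_k-\sigma_{k-2})$, and your treatment of $k=2$ through $h_{-1}=0$ and $\sigma_0=0$ matches the paper's convention.
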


\begin{proof}
For a single auxiliary variable $z$ one has $h_j(x,z)=\sum_{s=0}^jz^sh_{j-s}(x)$, so $\partial_zh_j(x,0)=h_{j-1}(x)$; for two auxiliary variables, the coefficient of $z_az_b$ in $h_j(x,z_a,z_b)$ is $h_{j-2}(x)$, so $\partial^2_{z_az_b}h_j(x,0)=h_{j-2}(x)$.  Here $h_{-1}=0$.  The quotient rule for $\widetilde\rho_k=h_k/h_{k-1}$ at $(x,0)$ gives the
first derivative $(h_{k-1}^2-h_kh_{k-2})/h_{k-1}^2=1-\tau_k$ and,
by~\eqref{eq:alpha-def}, the mixed second derivative
\begin{align*}
 \frac{\partial^2\widetilde\rho_k}{\partial z_a\,\partial z_b}(x,0)
 &=-\frac{h_{k-2}}{h_{k-1}}
   -\frac{h_kh_{k-3}}{h_{k-1}^2}
   +2\,\frac{h_kh_{k-2}^2}{h_{k-1}^3}\\
 &=\tau_k\left(2\,\frac{h_{k-2}}{h_{k-1}}-\frac{h_{k-1}}{h_k}-\frac{h_{k-3}}{h_{k-2}}\right)
 =\tau_k\bigl(2\sigma_{k-1}-\sigma_k-\sigma_{k-2}\bigr)=\tau_k\alpha_{k-1}.
 \qedhere
\end{align*}
\end{proof}

\begin{lemma}[Mixed derivative]\label{lem:mixed-derivative}
Let $k\ge2$, let $a$ be an auxiliary index, and let $D_v$ act on the $x$
variables. Then $D_v\partial_{z_a}\widetilde\rho_k(x,0)=-\tau_k(d_k-d_{k-1})$.
\end{lemma}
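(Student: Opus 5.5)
We prove the mixed-derivative formula by a direct quotient-rule computation at $(x,0)$. The main point is to differentiate in the auxiliary coordinate first, which removes $z$ and leaves a function of $x$ alone, and then to apply $D_v$ to that function.

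By the single-auxiliary expansion already used in Lemma~\ref{lem:aux-derivatives}, $h_j(x,z)=\sum_{s\ge0}z_a^sh_{j-s}(x)+(\text{terms in other }z_b)$. At $z=0$ this gives $\widetilde\rho_k=h_k/h_{k-1}$ and $\partial_{z_a}h_j(x,0)=h_{j-1}(x)$ for all $x$ in a neighbourhood. Hence the identity
\[
 \partial_{z_a}\widetilde\rho_k(x,0)=1-\frac{h_kh_{k-2}}{h_{k-1}^2}=1-\tau_k(x)
\]
holds as a function of $x$, not just at a point. Since $\widetilde\rho_k$ is a rational function, smooth near $(x,0)$ when $x>0$, the derivatives commute. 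It follows that $D_v\partial_{z_a}\widetilde\rho_k(x,0)=-D_v\tau_k(x)$.

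Next we compute $D_v\tau_k$. By definition $\tau_k=\rho_k/\rho_{k-1}$, so $\log\tau_k=\log\rho_k-\log\rho_{k-1}$. Therefore
\[
 D_v\tau_k=\tau_k\,(D_v\log\rho_k-D_v\log\rho_{k-1})=\tau_k(d_k-d_{k-1}),
\]
using the definition \eqref{eq:slope-curv-def} of $d_k$. Combining this with the previous paragraph gives the claim.

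Two edge cases need a remark. When $k=2$ we have $\rho_1=h_1$, and $d_1=D_v\log h_1$ is well defined because $h_1>0$ for $x>0$. The convention $h_{-1}=0$ is never needed here, since $k\ge2$ means only $h_k,h_{k-1},h_{k-2}$ appear, all with nonnegative index and all positive.

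The only delicate point is justifying that the $z_a$-derivative at $z=0$ is a smooth function of $x$ whose $D_v$-derivative equals the mixed partial. This follows from polynomiality of each $h_j$ in $(x,z)$ and positivity of $h_{k-1}(x)$ for $x\in\reals_{++}^n$, which makes the quotient smooth. The rest is the logarithmic-derivative identity above.
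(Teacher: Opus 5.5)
Your proposal is correct and follows the paper's proof essentially verbatim: identify $\partial_{z_a}\widetilde\rho_k(\cdot,0)=1-\tau_k$ as a function of $x$ (via Lemma~\ref{lem:aux-derivatives}), use smoothness to commute the derivatives, and compute $D_v\tau_k=\tau_k(d_k-d_{k-1})$ by logarithmic differentiation. Your extra remarks are accurate but not needed for the argument: $\rho_1=h_1$ when $k=2$, and $h_{-1}=0$ never enters.
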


\begin{proof}
Mixed partial derivatives of the smooth function $\widetilde\rho_k$ commute,
and $\partial_{z_a}\widetilde\rho_k(\cdot,0)=1-\tau_k$ on $\reals_{++}^n$ by
Lemma~\ref{lem:aux-derivatives}. Since $\tau_k=\rho_k/\rho_{k-1}$, we have
$D_v\tau_k=\tau_k(D_v\log\rho_k-D_v\log\rho_{k-1})=\tau_k(d_k-d_{k-1})$.
\end{proof}

\subsection{The finite variance identity}

Lemma~\ref{prop:poincare} uses the following identity, which writes the
variance of a finite sequence as a quadratic form in its increments.

\begin{lemma}[Finite-path Green kernel]\label{lem:green-kernel}
Let $r\ge1$, and let $w_0,\dots,w_{r-1}\in\reals$ have mean $\bar w$ and
increments $\eta_p=w_p-w_{p-1}$ for $1\le p\le r-1$. Then
\begin{equation}
 \sum_{j=0}^{r-1}(w_j-\bar w)^2
 =\sum_{p,q=1}^{r-1}\Gamma_{pq}\,\eta_p\eta_q,
 \qquad
 \Gamma_{pq}
 =\min(p,q)-\frac{pq}{r}
 =\frac{\min(p,q)\bigl(r-\max(p,q)\bigr)}{r},
\end{equation}
and the matrix $\Gamma$ is positive semidefinite with nonnegative entries.
\end{lemma}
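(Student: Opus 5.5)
The identity is a finite computation, so I will prove it directly: express each centred value $w_j-\bar w$ as a linear combination of the increments, then read off the quadratic form.

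\emph{Step 1: centred values in terms of increments.} Telescoping gives $w_j=w_0+\sum_{p=1}^{r-1}\one_{\{p\le j\}}\eta_p$. Averaging over $j\in\{0,\dots,r-1\}$, the coefficient of $\eta_p$ is the fraction of indices $j\ge p$, namely $(r-p)/r$. Hence
\[
 w_j-\bar w=\sum_{p=1}^{r-1}\Bigl(\one_{\{p\le j\}}-\tfrac{r-p}{r}\Bigr)\eta_p=:\sum_{p}c_{jp}\eta_p .
\]

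\emph{Step 2: the kernel.} Squaring and summing over $j$ gives $\sum_j(w_j-\bar w)^2=\sum_{p,q}\Gamma_{pq}\eta_p\eta_q$ with $\Gamma_{pq}=\sum_{j=0}^{r-1}c_{jp}c_{jq}$. Expand the product:
\[
 \sum_j \one_{\{j\ge\max(p,q)\}}=r-\max(p,q),\qquad
 \sum_j \one_{\{j\ge p\}}=r-p .
\]
Combining these,
\[
 \Gamma_{pq}=\bigl(r-\max(p,q)\bigr)-\frac{(r-p)(r-q)}{r}-\frac{(r-q)(r-p)}{r}+\frac{(r-p)(r-q)}{r}
 =r-\max(p,q)-\frac{(r-p)(r-q)}{r}.
\]
Expanding $(r-p)(r-q)/r=r-p-q+pq/r$ and using $p+q-\max(p,q)=\min(p,q)$ gives $\Gamma_{pq}=\min(p,q)-pq/r$. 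If $p\le q$, this equals $p-pq/r=p(r-q)/r$, which is the second stated form.

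\emph{Step 3: sign properties.} $\Gamma$ is a Gram matrix $C^\top C$ with $C=(c_{jp})$, so it is positive semidefinite. Its entries are nonnegative because $\min(p,q)\ge0$ and $r-\max(p,q)\ge1$ for $1\le p,q\le r-1$.

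No step is a genuine obstacle. The one place to be careful is the counting in Step 2: keep the index range $0\le j\le r-1$ consistent with $\eta_p=w_p-w_{p-1}$, so that the count of indices $j\ge p$ is exactly $r-p$.
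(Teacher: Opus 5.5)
Your proof is correct and is essentially the paper's argument. Your matrix $C=(c_{jp})$ is exactly $PB$ in the paper's notation, where $B_{jp}=\one_{\{p\le j\}}$ and $P=I-r^{-1}\one\one^\top$ is the centering projection, so your Gram form $C^\top C$ is the paper's $B^\top PB$ and your entrywise count reproduces its formula $r-\max(p,q)-(r-p)(r-q)/r$.
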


\begin{proof}
The left side of \eqref{eq:green-variance} is unchanged when a constant is added to every $w_j$, so assume $w_0=0$; then $w_j=\sum_{p=1}^{r-1}\one_{\{p\le j\}}\eta_p$.  Let $B$ be the $r\times(r-1)$ matrix $B_{jp}=\one_{\{p\le j\}}$, and let $P=I-r^{-1}\one\one^\top$ be the orthogonal projection onto mean-zero vectors, so that the left side equals $\eta^\top B^\top PB\,\eta$.  Direct counting gives $(B^\top B)_{pq}=r-\max(p,q)$ and
$(B^\top\one)_p=r-p$, hence for $p\le q$,
$(B^\top PB)_{pq}=r-q-(r-p)(r-q)/r=p(r-q)/r=\min(p,q)-pq/r$.
The Gram form $\Gamma=B^\top PB$ gives positive semidefiniteness, and the
product formula gives nonnegative entries.
\end{proof}

\subsection{A quantitative remainder}
\label{app:remainder}
The first inequality in~\eqref{eq:poincare} uses the weights
$R_p\le\sigma_{\ell+p}$; keeping the difference gives a lower bound on the
directional curvature of $G_{\ell,r}$. For $r\ge2$, with $\eta_p=d_{\ell+p+1}-d_{\ell+p}$ and
$J$ as in Lemma~\ref{prop:poincare}, equations
\eqref{eq:poincare} and~\eqref{eq:telescope-hessian} give
\begin{equation}\label{eq:curvature-remainder}
 \frac{D_v^2G_{\ell,r}}{G_{\ell,r}}
 \ge\frac{\kappa_{\ell+1}}r+
 \frac1{r^2}\sum_{p\in J}
 \frac{(r-p)\sigma_\ell+p\sigma_{\ell+r}}{\alpha_{\ell+p}}\eta_p^2.
\end{equation}
Indeed, the curvature bound~\eqref{eq:2} contributes
$\sigma_{\ell+p}\eta_p^2/\alpha_{\ell+p}$, whereas the variance bound
subtracts only $R_p\eta_p^2/\alpha_{\ell+p}$, and $\sigma_{\ell+p}-R_p$ is the
chord term in~\eqref{eq:finite-row}. We do not claim optimality.

\section{Direct Hessian proofs (up to $k=4$) and other minor results}\label{sec:consequences}

The results of this appendix are not needed for Theorem~\ref{thm:main}
and Corollary~\ref{cor:power}. They give a direct Hessian proof of convexity
of $\rho_k$ in low degrees and record how $\rho_k$ and $\sigma_k$ depend on
the degree~$k$. Both parts use the elementary bounds
\begin{equation}\label{eq:basic-main}
 \max_i x_i\le\rho_k(x)\le\sum_i x_i
 \qquad(x\in\reals_+^n\setminus\{0\}).
\end{equation}
Coefficient comparison proves both. For each $j$ the polynomial
$x_jh_{k-1}(x)$ consists precisely of the
monomials of $h_k(x)$ that contain $x_j$, each with coefficient one, so
$h_k(x)-x_jh_{k-1}(x)=h_k(x_1,\dots,x_{j-1},0,x_{j+1},\dots,x_n)\ge0$, which
is the lower bound. For the upper bound, the coefficient of a degree-$k$
monomial $x_{i_1}\cdots x_{i_k}$ in $h_1h_{k-1}$ is the number of distinct
indices among $i_1,\dots,i_k$, hence at least one, so $h_k\le h_1h_{k-1}$
coefficientwise.

\subsection{Direct Hessian proofs in low degrees}\label{sec:direct-hessian}
We compute the Hessian of a consecutive ratio directly. The resulting
identity holds in every degree, and in low degrees it proves convexity.

For this calculation, it is convenient to write
$F_k=h_{k+1}/h_k=\rho_{k+1}$, with $k\ge1$. Fix $x\in\reals_{++}^n$.
Since $F_k$ is homogeneous of degree one, Euler's identity gives
$\nabla^2F_k(x)x=0$. Thus subtracting a multiple of $x$ from a direction $v$ does not change
$D_v^2F_k$. We choose that multiple to make $D_vh_k=0$: explicitly,
replace $v$ by $v-(D_vh_k)/(kh_k)\,x$. This centering removes the
first-derivative terms from the quotient rule, leaving
\begin{equation}\label{eq:centered-quotient}
 h_k^2D_v^2F_k=h_kD_v^2h_{k+1}-h_{k+1}D_v^2h_k.
\end{equation}
To expand the difference on the right, define
\begin{equation}\label{eq:a-c-def}
 a_j=\sum_i v_ix_i^j,\qquad c_j=\sum_i v_i^2x_i^j,
 \qquad H_x(t)=\sum_{m\ge0}h_m(x)t^m=\prod_i(1-x_it)^{-1}.
\end{equation}
Differentiating the generating function gives
$D_vH_x=H_x\sum_{j\ge0}a_jt^{j+1}$. Coefficient comparison therefore yields
$D_vh_m=\sum_{j=0}^{m-1}a_jh_{m-j-1}$, so the centering condition is
\begin{equation}\label{eq:algebraic-centering}
 \sum_{j=0}^{k-1}a_jh_{k-j-1}=0.
\end{equation}
A second differentiation, followed by coefficient comparison, gives
\begin{align}
 D_v^2H_x&=H_x\left[
 \left(\sum_{j\ge0}a_jt^{j+1}\right)^2
 +\sum_{j\ge0}(j+1)c_jt^{j+2}\right],\label{eq:second-derivative-H}\\
 D_v^2h_m&=
 \sum_{i+j\le m-2}a_i a_jh_{m-i-j-2}
 +\sum_{j=0}^{m-2}(j+1)c_jh_{m-j-2},\label{eq:second-derivative-h}
\end{align}
where $i,j\ge0$ in the first sum.
The same quadratic expressions occur at degrees $k$ and $k+1$.
Their coefficients in~\eqref{eq:centered-quotient} are
$\Delta_q=h_kh_{k-q-1}-h_{k+1}h_{k-q-2}$ for $0\le q<k$, with $h_{-1}=0$, and
Jacobi--Trudi identifies each coefficient as a Schur
polynomial~\citep{macdonald1995}:
\begin{equation}\label{eq:Delta-Schur}
 \Delta_q=s_{(k,k-q-1)}(x)\ge0.
\end{equation}
Here $s_\lambda$ denotes the Schur polynomial indexed by $\lambda$;
its nonnegative coefficients give the inequality. Grouping terms by
$q=i+j$ now gives the Hessian formula.

\begin{prop}[Centered Hessian identity]\label{prop:hessian-identity}
Under~\eqref{eq:algebraic-centering},
\begin{equation}\label{eq:all-k-hessian}
 h_k^2D_v^2F_k=
 \sum_{q=0}^{k-1}\Delta_q
 \left(\sum_{j=0}^q a_j a_{q-j}+(q+1)c_q\right).
\end{equation}
\end{prop}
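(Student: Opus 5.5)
The identity should follow by substituting the coefficient formulas \eqref{eq:second-derivative-h} at degrees $m=k+1$ and $m=k$ into the centered quotient rule \eqref{eq:centered-quotient}. No inequality is needed at this stage, only bookkeeping, so the plan is to organize the expansion by the total index $q$.

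First I would confirm \eqref{eq:centered-quotient}. With $D_vh_k=0$, the quotient rule gives
\[
D_vF_k=\frac{D_vh_{k+1}}{h_k}\quad\text{and}\quad D_v^2F_k=\frac{D_v^2h_{k+1}}{h_k}-\frac{h_{k+1}D_v^2h_k}{h_k^2}.
\]
The term $-2D_vh_{k+1}D_vh_k/h_k^2$ vanishes, as does $h_{k+1}(D_vh_k)^2$. Multiplying by $h_k^2$ gives \eqref{eq:centered-quotient}. Since the centering only replaces $v$ by $v-\lambda x$, and $D_v^2F_k$ is unchanged by Euler's identity, it is legitimate to assume \eqref{eq:algebraic-centering}. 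The quantities $a_j,c_j$ are then those of the centered direction.

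Next I would write $D_v^2h_m=\sum_{q\ge0}Q_q\,h_{m-q-2}$, where
\[
Q_q=\sum_{i+j=q}a_ia_j+(q+1)c_q .
\]
The key point is that $Q_q$ does not depend on $m$. This is visible directly in \eqref{eq:second-derivative-h}: grouping the double sum by $q=i+j$ gives $\sum_{q\le m-2}\bigl(\sum_{i+j=q}a_ia_j\bigr)h_{m-q-2}$, and the single sum is already indexed by $q=j$. Using the convention $h_{-1}=0$ and $h_{\text{negative}}=0$, we may let $q$ range over $0\le q\le m-2$ uniformly.

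Substituting into $h_kD_v^2h_{k+1}-h_{k+1}D_v^2h_k$ then gives
\[
\sum_{q=0}^{k-1}Q_q\bigl(h_kh_{k-q-1}-h_{k+1}h_{k-q-2}\bigr)=\sum_{q=0}^{k-1}\Delta_qQ_q .
\]
The index range needs one check: degree $k+1$ contributes $q\le k-1$ and degree $k$ contributes $q\le k-2$. At $q=k-1$ the subtracted term carries $h_{-1}=0$, which matches the definition of $\Delta_{k-1}=h_kh_0$. This yields \eqref{eq:all-k-hessian}.

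The main obstacle is purely bookkeeping: handling the boundary indices consistently, and confirming that \eqref{eq:second-derivative-h} follows from \eqref{eq:second-derivative-H} by coefficient extraction. For the latter, the coefficient of $t^m$ in $H_x\cdot t^{i+j+2}$ is $h_{m-i-j-2}$, and in $H_x\cdot t^{j+2}$ it is $h_{m-j-2}$. The Schur identification \eqref{eq:Delta-Schur} is not needed for the identity itself. It only matters later, for sign information in low degrees.
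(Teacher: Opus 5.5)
Your proposal is correct and follows essentially the paper's route. Like the paper, you substitute \eqref{eq:second-derivative-h} at degrees $k+1$ and $k$ into the centered quotient rule \eqref{eq:centered-quotient}, group the terms by $q=i+j$, and use $h_{-1}=0$ to absorb the boundary term $q=k-1$ into $\Delta_{k-1}=h_k$; your explicit checks of the quotient rule and the coefficient extraction are the bookkeeping the paper leaves implicit.
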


The coefficients $\Delta_q$ are nonnegative, but the quadratic expressions
they multiply contain signed cross terms. The centering relation can
control these cross terms in low degrees, giving the following direct
proofs without using the queuing representation.

\begin{prop}[Direct algebraic convexity]\label{prop:k2-warmup}\label{prop:k3-warmup}
The functions $F_1=h_2/h_1$, $F_2=h_3/h_2$, and $F_3=h_4/h_3$ are convex
on $\reals_+^n$, with value $0$ at the origin.
\end{prop}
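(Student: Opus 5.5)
We will prove Proposition~\ref{prop:k2-warmup} directly from the centered Hessian identity~\eqref{eq:all-k-hessian}. The plan has three steps: reduce to positive points, use the centering relation~\eqref{eq:algebraic-centering} to eliminate the signed cross terms, and conclude with Cauchy--Schwarz.

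\textbf{Reduction.} By continuity of $F_k$ on $\reals_+^n$ (using $F_k(0)=0$ together with the bounds~\eqref{eq:basic-main}), it suffices to show $D_v^2F_k(x)\ge0$ for $x\in\reals_{++}^n$ and every direction $v$. Euler's identity lets us assume that $v$ is centered, i.e.\ that~\eqref{eq:algebraic-centering} holds. The right side of~\eqref{eq:all-k-hessian} is then a quadratic form in $v$ with coefficients $\Delta_q\ge0$ by~\eqref{eq:Delta-Schur}. Throughout we use two elementary facts:
\begin{itemize}
\item Cauchy--Schwarz in the weighted form $a_j^2=(\sum_iv_ix_i^j)^2\le c_{j-m}\sum_ix_i^{j+m}$ for suitable splittings of the power $x_i^j$;
\item $a_0^2\le n\,c_0$, which we will try to avoid because it is not dimension-free.
\end{itemize}

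\textbf{Case $k=1$.} Centering reads $a_0h_0=0$, so $a_0=0$. The identity then reduces to $h_1^2D_v^2F_1=\Delta_0(a_0^2+c_0)=\Delta_0c_0\ge0$, and we are done.

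\textbf{Case $k=2$.} Centering gives $a_0h_1+a_1=0$, so $a_1=-h_1a_0$. The right side becomes
\[
\Delta_0(a_0^2+c_0)+\Delta_1(2a_0a_1+2c_1)=\Delta_0(a_0^2+c_0)+\Delta_1(2c_1-2h_1a_0^2).
\]
The only negative term is $-2\Delta_1h_1a_0^2$. We intend to absorb it in two ways:
\begin{itemize}
\item rewrite $a_0=-a_1/h_1$, so that the term equals $-2\Delta_1a_1^2/h_1$, and apply Cauchy--Schwarz in the form $a_1^2\le c_1\sum_ix_i=c_1h_1$, which bounds it by $2\Delta_1c_1$;
\item the positive $a_0^2$ term then survives, so the whole expression is $\ge\Delta_0(a_0^2+c_0)\ge0$.
\end{itemize}

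\textbf{Case $k=3$.} This is the main obstacle. Centering reads $a_0h_2+a_1h_1+a_2=0$, and the quadratic form contains $\Delta_0(a_0^2+c_0)$, $\Delta_1(2a_0a_1+2c_1)$ and $\Delta_2(2a_0a_2+a_1^2+3c_2)$. We plan to proceed as follows.
\begin{enumerate}
\item Eliminate $a_2$ using centering, so that everything is expressed in $a_0,a_1$ and the $c_j$.
\item Bound the resulting indefinite $2\times2$ form in $(a_0,a_1)$ using the diagonal terms $c_1,c_2$ via Cauchy--Schwarz estimates $a_1^2\le c_0h_2^{*}$, $a_1^2\le c_1h_1$, and $(a_0h_1+a_1)^2\le\ldots$.
\item Choose the Cauchy--Schwarz weights so that the remaining coefficient inequalities among $\Delta_0,\Delta_1,\Delta_2$ and $h_1,h_2,h_3$ become Schur-positivity statements (e.g.\ $\Delta_1h_1\le\ldots$). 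These can be checked monomially or via Jacobi--Trudi and Littlewood--Richardson.
\end{enumerate}

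The delicate point is finding weights for which every leftover coefficient is a nonnegative combination of Schur polynomials. If the weights cannot be found directly, our fallback is a sum-of-squares decomposition in the variables $v_i$ with polynomial weights in $x$, obtained by organizing the form as $\sum_i v_i^2(\cdot)$ minus cross terms and completing squares in $v$.
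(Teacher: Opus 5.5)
Your reduction and the cases $k=1,2$ are correct and match the paper's argument. For $k=2$, centering plus $a_1^2\le h_1c_1$ absorbs $-2h_1h_2a_0^2$ and leaves $\Delta_0(a_0^2+c_0)$.

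The gap is $k=3$, which is the real content of the proposition. What you give there is a search plan with unspecified weights and an SOS fallback, not a proof, and the estimates you list do not close it. The combination $a_0h_1+a_1$ is the $k=2$ centering relation; at $k=3$ centering reads $a_2=-(h_2a_0+h_1a_1)$. After eliminating $a_2$ the identity becomes
\[
 h_3^2D_v^2F_3=\Delta_0c_0+2\Delta_1c_1+3h_3c_2+h_3a_1^2-2h_4a_0a_1-(h_2h_3+h_1h_4)a_0^2 .
\]
The $c_2$ term is essential, not optional slack. Without it the form is negative on centered directions: take $n=2$, $x=(1,\varepsilon)$ and the centered $v\approx(1,-3)$. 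As $\varepsilon\to0$ the remaining terms tend to $-3$, while $3h_3c_2\to3$. Consequently no lower bounds on $c_0,c_1$ can absorb $-(h_2h_3+h_1h_4)a_0^2$, whether from $a_1^2\le h_1c_1$, $a_1^2\le p_2c_0$ or $a_0^2\le nc_0$. You must control $a_0$ through $c_2$ via the centering relation.

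The missing step is the $j=2$, $m=0$ instance of your own weighted Cauchy--Schwarz, applied to the \emph{eliminated} variable: $c_2\ge a_2^2/p_2=(h_2a_0+h_1a_1)^2/p_2$ with $p_2=\sum_ix_i^2=2h_2-h_1^2$. Combine it with $c_1\ge a_1^2/h_1$. The identity $p_2+h_1^2=2h_2$ then makes the $a_0a_1$ coefficients match exactly, and one obtains
\[
 h_3^2D_v^2F_3\ \ge\ \Bigl(h_3+\frac{2\Delta_1}{h_1}+\frac{3h_3h_1^2}{p_2}\Bigr)\Bigl(a_1+\frac{h_1}{2}a_0\Bigr)^2+\frac{\Delta_0}{2}\,a_0^2+\Delta_0c_0\ \ge\ 0 .
\]
So nothing beyond $\Delta_q\ge0$ is needed, and no Littlewood--Richardson step arises. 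Your wish to avoid $a_0^2\le nc_0$ is also borne out. The paper uses that bound and gets $\frac{n+2}{2n}\Delta_0a_0^2$ in place of $\frac12\Delta_0a_0^2+\Delta_0c_0$, but either version closes the argument.
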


\begin{proof}
For $k=1$, centering gives $a_0=0$, and~\eqref{eq:all-k-hessian}
reduces to $h_1^2D_v^2F_1=h_1c_0\ge0$.

For $k=2$, centering eliminates $a_1$: the relation
$h_1a_0+a_1=0$ gives $a_1=-h_1a_0$. With
$\Delta_0=h_1h_2-h_3\ge0$ and $\Delta_1=h_2$,
identity~\eqref{eq:all-k-hessian} becomes
\[
 h_2^2\,D^2F_2[v,v]
 =\Delta_0(a_0^2+c_0)+h_2(2a_0a_1+2c_1)
 =\Delta_0c_0+2h_2c_1-(h_1h_2+h_3)\,a_0^2.
\]
Cauchy--Schwarz gives $a_1^2=\bigl(\sum_ix_iv_i\bigr)^2\le h_1c_1$, hence $c_1\ge h_1a_0^2$ and $h_2^2\,D^2F_2[v,v]\ge\Delta_0(c_0+a_0^2)\ge0$.

At $k=3$, the same strategy leaves a quadratic form in $a_0,a_1$.
Write $s=h_1$ and $p_2=\sum_ix_i^2=2h_2-s^2$; then $\Delta_0=h_3h_2-h_4s$,
$\Delta_1=h_3s-h_4$ and $\Delta_2=h_3$ are nonnegative
by~\eqref{eq:Delta-Schur}. Centering now eliminates
$a_2=-h_2a_0-sa_1$, and~\eqref{eq:all-k-hessian} expands to
\[
 h_3^2\,D^2F_3[v,v]
 =\Delta_0c_0+2\Delta_1c_1+3h_3c_2
 +h_3a_1^2-2h_4a_0a_1-(h_3h_2+h_4s)\,a_0^2.
\]
Three Cauchy--Schwarz inequalities give $c_0\ge a_0^2/n$, $c_1\ge a_1^2/s$ and
$c_2\ge a_2^2/p_2=(h_2a_0+sa_1)^2/p_2$.
Substitution leaves a quadratic form that completes to a square.
Using $p_2+s^2=2h_2$ and the definitions of $\Delta_0,\Delta_1$ gives
\[
 h_3^2\,D^2F_3[v,v]
 \ge\left(h_3+\frac{2\Delta_1}{s}+\frac{3h_3s^2}{p_2}\right)
 \left(a_1+\frac{s}{2}a_0\right)^{\!2}
 +\frac{n+2}{2n}\,\Delta_0\,a_0^2\ \ge\ 0,
\]
which proves the claim for $F_3$.

For each of the three ratios, undoing the radial centering leaves the
Hessian quadratic form unchanged. The bound $0\le F_k\le h_1$
from~\eqref{eq:basic-main} gives continuity at the origin;
elsewhere the denominator is positive. Convexity therefore extends from
the positive orthant to its boundary.
\end{proof}

At higher degrees, the identity still describes the full Hessian, but it
does not by itself establish its sign. The queuing proof of
Section~\ref{sec:queue-main} proves the required positivity for all $k$ simultaneously.

\subsection{Dependence on the degree}\label{sec:sigma}

The queuing interpretation makes the degree index natural: $k$ is the number
of circulating customers. This subsection shows that $\sigma_k$ is
nondecreasing and concave in $k$, and that $\rho_k$ decreases to
$\max_ix_i$.

\begin{lemma}[Degree log-concavity]\label{lem:degree-logconcave}
For every $x\in\reals_+^n\setminus\{0\}$ and every $k\ge1$,
\begin{equation}\label{eq:jacobi-trudi}
 h_k(x)^2-h_{k-1}(x)h_{k+1}(x)=s_{(k,k)}(x)\ge0.
\end{equation}
Consequently $\sigma_k(x)$ is nondecreasing in $k$, and
$0\le\sigma_k(x)\le(\max_ix_i)^{-1}$ by~\eqref{eq:basic-main}.
\end{lemma}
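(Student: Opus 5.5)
The plan is to prove the identity $h_k^2-h_{k-1}h_{k+1}=s_{(k,k)}$ from the Jacobi--Trudi formula and then read off the two consequences. Jacobi--Trudi, already used in~\eqref{eq:Delta-Schur}, gives $s_\lambda=\det(h_{\lambda_i-i+j})$. For $\lambda=(k,k)$ this is the $2\times2$ determinant
\[
 \det\begin{pmatrix}h_k&h_{k+1}\\ h_{k-1}&h_k\end{pmatrix}=h_k^2-h_{k-1}h_{k+1},
\]
with the convention $h_{-1}=0$ not needed since $k\ge1$. Nonnegativity then follows because Schur polynomials have nonnegative (Kostka) coefficients. Concretely, $s_{(k,k)}$ is the generating function of semistandard Young tableaux of shape $(k,k)$, so it is $\ge0$ on $\reals_+^n$. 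It vanishes identically when $n=1$, which is consistent with $h_k=x_1^k$.

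For monotonicity of $\sigma_k=h_{k-1}/h_k$, fix $x\ne0$. Every $h_j(x)>0$ because $h_j\ge(\max_i x_i)^j$. Dividing the identity by $h_kh_{k+1}>0$ gives $h_k/h_{k+1}\ge h_{k-1}/h_k$, that is, $\sigma_{k+1}\ge\sigma_k$ for $k\ge1$. The step from $\sigma_0=0$ to $\sigma_1=1/h_1>0$ is trivial. For the bounds, the lower bound $\sigma_k\ge0$ is immediate. The upper bound comes from the left inequality in~\eqref{eq:basic-main}, $\rho_k\ge\max_ix_i$, which gives $\sigma_k=\rho_k^{-1}\le(\max_ix_i)^{-1}$.

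The only step with real content is nonnegativity of the determinant. If one prefers to avoid citing Kostka positivity, I would try the alternative of a Lindström--Gessel--Viennot lattice-path argument. There, $h_k^2-h_{k-1}h_{k+1}$ counts pairs of nonintersecting paths, which is manifestly a nonnegative polynomial. A second alternative is an explicit injection: write $h_{k-1}h_{k+1}$ as pairs of multisets and map each pair injectively into $h_k^2$ pairs by a switching argument at the first crossing. I expect citing~\citep{macdonald1995} to suffice, as in~\eqref{eq:Delta-Schur}.
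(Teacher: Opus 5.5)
Your proposal is correct and follows the paper's proof. In both, Jacobi--Trudi identifies $h_k^2-h_{k-1}h_{k+1}$ with $s_{(k,k)}$, Schur positivity gives nonnegativity, dividing by $h_kh_{k+1}>0$ gives $\sigma_{k+1}\ge\sigma_k$ (with $\sigma_1\ge\sigma_0=0$ trivial), and the upper bound is the reciprocal of $\rho_k\ge\max_i x_i$ from~\eqref{eq:basic-main}. Your explicit $2\times2$ determinant and the bound $h_j(x)\ge(\max_i x_i)^j>0$ just fill in details the paper leaves implicit.
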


\begin{proof}
The Jacobi--Trudi identity~\citep{macdonald1995} expresses the left side of \eqref{eq:jacobi-trudi} as the Schur polynomial $s_{(k,k)}$, which has nonnegative coefficients.  Dividing by $h_kh_{k+1}>0$ gives $\sigma_{k+1}\ge\sigma_k$ for $k\ge1$, and $\sigma_1\ge0=\sigma_0$ is trivial.  The upper bound is the lower bound in~\eqref{eq:basic-main}.
\end{proof}

\paragraph{Equilibrium-rate representation.}
For $x\in\reals_{++}^n$, choose $0<a<(\max_i x_i)^{-1}$ and let
$Y_1,\dots,Y_n$ be independent geometric random variables with
$\Pr(Y_i=q)=(1-ax_i)(ax_i)^q$ for $q\ge0$. Writing $Y=\sum_iY_i$,
convolution gives $\Pr(Y=k)=a^kh_k(x)/H_x(a)$ for $k\ge0$.
The equilibrium rate of a random variable $Y$ with full support on $\Z_+$
is $r_Y(0)=0$ and $r_Y(k)=\Pr(Y=k-1)/\Pr(Y=k)$ for $k\ge1$
\citep[\S2, equation~(2)]{shanthikumar1988second}. Thus $r_Y(k)=\sigma_k/a$
for $k\ge0$, while $r_{Y_i}(0)=0$ and $r_{Y_i}(k)=1/(ax_i)$ for $k\ge1$, so
each $r_{Y_i}$ is nondecreasing and concave.
\citet[Proposition~1(A)]{shanthikumar1988second} prove that these properties
of equilibrium rates are preserved under convolution. Iteration therefore
gives the nondecreasing concavity of $\sigma_k$ on $\Z_+$. In their cyclic
network representation, choosing the common visit weight $a$ gives exactly
the rate $r_Y=\sigma/a$; this choice accounts for the throughput normalization.

\begin{prop}\label{prop:K-monotonicity}
For fixed $x\in\reals_+^n\setminus\{0\}$, $\rho_k(x)$ is nonincreasing in $k$
and tends to $\max_ix_i$.
\end{prop}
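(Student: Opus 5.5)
Monotonicity in $k$ comes from Lemma~\ref{lem:degree-logconcave}, the limit from a dominance argument on the coefficients of $h_k$, and the boundary case $x\in\reals_+^n\setminus\{0\}$ is handled by discarding zero coordinates.

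\textbf{Monotonicity.} By Lemma~\ref{lem:degree-logconcave}, $h_k^2\ge h_{k-1}h_{k+1}$. If $h_{k-1},h_k,h_{k+1}>0$, dividing by $h_kh_{k-1}$ gives
\[
\rho_{k+1}=\frac{h_{k+1}}{h_k}\le\frac{h_k}{h_{k-1}}=\rho_k .
\]
Equivalently, $\sigma_k$ is nondecreasing, and $\rho_k=1/\sigma_k$. The denominators are positive because some $x_i>0$ forces $h_m(x)\ge x_i^m>0$ for every $m$. So the sequence is nonincreasing for every nonzero $x$, including boundary points.

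\textbf{Lower bound.} By~\eqref{eq:basic-main}, $\rho_k\ge\max_ix_i$ for all $k$. Hence the limit $L=\lim_k\rho_k$ exists and $L\ge\max_ix_i$.

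\textbf{Upper bound.} It suffices to show $h_k^{1/k}\to\max_i x_i=:M$. Indeed, since $\rho_m=h_m/h_{m-1}$ and $h_0=1$, we have $h_k=\prod_{m=1}^k\rho_m$. So $h_k^{1/k}$ is the geometric mean of $\rho_1,\dots,\rho_k$, and the Cesàro-type fact for geometric means of a convergent positive sequence gives $h_k^{1/k}\to L$.

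To compute $\lim h_k^{1/k}$, note that $h_k$ has $\binom{n+k-1}{k}$ monomials, each at most $M^k$, and at least one monomial, $x_{i^*}^k=M^k$. Therefore
\[
M^k\le h_k\le\binom{n+k-1}{n-1}M^k\le (k+1)^{n-1}M^k .
\]
Taking $k$th roots, both sides tend to $M$, since $(k+1)^{(n-1)/k}\to1$. This gives $L=M$.

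\textbf{Main obstacle.} This is only the limit-identification step, and it is mild. The polynomial bound on the number of monomials is the key ingredient. The geometric-mean passage is where care is needed: I would use that $\log\rho_m\to\log L$ implies the Cesàro averages of $\log\rho_m$ converge to $\log L$, with $L\ge M>0$ so the logarithms are finite.

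As an alternative, the Equilibrium-rate representation would give $\sigma_k\to1/M$, but the counting argument above is more self-contained.
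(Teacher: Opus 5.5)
Your proof is correct and follows the paper's argument essentially step for step. Monotonicity comes from $h_k^2\ge h_{k-1}h_{k+1}$ (Lemma~\ref{lem:degree-logconcave}), the bound $L\ge\max_i x_i$ from~\eqref{eq:basic-main}, and $L=M$ from squeezing $h_k^{1/k}$ between $M$ and $\binom{n+k-1}{k}^{1/k}M$, since $h_k^{1/k}$ is the geometric mean of $\rho_1,\dots,\rho_k$; your extra details (positive denominators at boundary points, the estimate $\binom{n+k-1}{n-1}\le(k+1)^{n-1}$, and the Ces\`aro argument on logarithms) only make explicit what the paper leaves implicit.
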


\begin{proof}
Lemma~\ref{lem:degree-logconcave} gives $h_k^2\ge h_{k-1}h_{k+1}$, so
$\rho_{k+1}=h_{k+1}/h_k\le h_k/h_{k-1}=\rho_k$. Hence $\rho_k(x)$ decreases to
some limit $L\ge\max_i x_i$ by~\eqref{eq:basic-main}. Let $M=\max_i x_i$.
Since $h_k$ has $\binom{n+k-1}{k}$ monomials,
$M^k\le h_k(x)\le\binom{n+k-1}{k}M^k$, and therefore $h_k(x)^{1/k}\to M$. But
$h_k(x)^{1/k}=\bigl(\prod_{j=1}^k\rho_j(x)\bigr)^{1/k}$ is a geometric mean of
a positive sequence converging to $L$, so it also converges to $L$. Hence
$L=M$.
\end{proof}

In queuing language, increasing the number of circulating customers decreases the time per completion and drives the system toward the bottleneck service time.


\begin{thebibliography}{11}
\providecommand{\natexlab}[1]{#1}
\providecommand{\url}[1]{\texttt{#1}}
\expandafter\ifx\csname urlstyle\endcsname\relax
  \providecommand{\doi}[1]{doi: #1}\else
  \providecommand{\doi}{doi: \begingroup \urlstyle{rm}\Url}\fi

\bibitem[Baccelli et~al.(1992)Baccelli, Cohen, Olsder, and
  Quadrat]{baccelli1992synchronization}
F.~Baccelli, G.~Cohen, G.~J. Olsder, and J.-P. Quadrat.
\newblock \emph{Synchronization and Linearity: An Algebra for Discrete Event
  Systems}.
\newblock Wiley, Chichester, 1992.

\bibitem[Baston(1978)]{baston1978two}
V.~J. Baston.
\newblock Two inequalities for the complete symmetric functions.
\newblock \emph{Mathematical Proceedings of the Cambridge Philosophical
  Society}, 84\penalty0 (1):\penalty0 1--3, 1978.
\newblock \doi{10.1017/S0305004100054803}.

\bibitem[Dowdy et~al.(1984)Dowdy, Eager, Gordon, and
  Saxton]{dowdy1984throughput}
L.~W. Dowdy, D.~L. Eager, K.~D. Gordon, and L.~V. Saxton.
\newblock Throughput concavity and response time convexity.
\newblock \emph{Information Processing Letters}, 19\penalty0 (4):\penalty0
  209--212, 1984.
\newblock \doi{10.1016/0020-0190(84)90087-5}.

\bibitem[Gordon and Newell(1967)]{gordon1967closed}
W.~J. Gordon and G.~F. Newell.
\newblock Closed queuing systems with exponential servers.
\newblock \emph{Operations Research}, 15\penalty0 (2):\penalty0 254--265, 1967.
\newblock \doi{10.1287/opre.15.2.254}.

\bibitem[Khare and Tao(2021)]{khare2021}
A.~Khare and T.~Tao.
\newblock On the sign patterns of entrywise positivity preservers in fixed
  dimension.
\newblock \emph{American Journal of Mathematics}, 143\penalty0 (6):\penalty0
  1863--1929, 2021.
\newblock \doi{10.1353/ajm.2021.0049}.

\bibitem[Macdonald(1995)]{macdonald1995}
I.~G. Macdonald.
\newblock \emph{Symmetric Functions and Hall Polynomials}.
\newblock Oxford University Press, Oxford, 2nd edition, 1995.

\bibitem[Marcus and Lopes(1957)]{marcus1957inequalities}
M.~Marcus and L.~Lopes.
\newblock Inequalities for symmetric functions and {H}ermitian matrices.
\newblock \emph{Canadian Journal of Mathematics}, 9:\penalty0 305--312, 1957.
\newblock \doi{10.4153/CJM-1957-037-9}.

\bibitem[McLeod(1959)]{mcleod1959}
J.~B. McLeod.
\newblock On four inequalities in symmetric functions.
\newblock \emph{Proceedings of the Edinburgh Mathematical Society}, 11\penalty0
  (4):\penalty0 211--219, 1959.
\newblock \doi{10.1017/S0013091500021921}.

\bibitem[Norris(1997)]{norris1997markov}
J.~R. Norris.
\newblock \emph{{M}arkov Chains}.
\newblock Cambridge Series in Statistical and Probabilistic Mathematics.
  Cambridge University Press, Cambridge, 1997.

\bibitem[Shanthikumar and Yao(1988)]{shanthikumar1988second}
J.~G. Shanthikumar and D.~D. Yao.
\newblock Second-order properties of the throughput of a closed queueing
  network.
\newblock \emph{Mathematics of Operations Research}, 13\penalty0 (3):\penalty0
  524--534, 1988.
\newblock \doi{10.1287/moor.13.3.524}.

\bibitem[Sra(2020)]{sra2020}
S.~Sra.
\newblock New concavity and convexity results for symmetric polynomials and
  their ratios.
\newblock \emph{Linear and Multilinear Algebra}, 68\penalty0 (5):\penalty0
  1031--1038, 2020.
\newblock \doi{10.1080/03081087.2018.1527891}.

\end{thebibliography}
\end{document}